\undefined\PassOptionsToPackage{dvips}{graphicx}%
\DeclareSymbolFont{lasy}{U}{lasy}{m}{n}
\let\Box\undefined
\DeclareMathSymbol\Box{\mathord}{lasy}{"32}
\newcounter{bidon}
\newcommand{\rdb}{\refstepcounter{bidon}}
\newtheorem{theorem}{Théorème}   
\newtheorem{proposition}{Proposition}[section]
\newtheorem{lemma}[proposition]{Lemme}
\newtheorem{remark}[proposition]{Remarque}
\newtheorem{definition}[proposition]{Définition}
\newtheorem{notation}[proposition]{Notation}
\newcommand {\junk}[1]{}
\newenvironment{proof}[1]{
\trivlist \item[\hskip \labelsep{\it #1}]}{\hfill\mbox{$\Box$}
\endtrivlist}
\def\maketitle{\par
\begingroup
\def\@makefnmark{\hbox
to 0pt{$^{\@thefnmark}$\hss}}
\if@twocolumn
\twocolumn[\@maketitle]
\else \newpage
\global\@topnum\z@ \@maketitle
\fi\thispagestyle{plain}\@thanks
\endgroup
\setcounter{footnote}{0}
\let\maketitle\relax
\let\@maketitle\relax
\gdef\@thanks{}\gdef\@author{}\gdef\@title{}\let\thanks\relax} 
\def\.@{\char'76}
\def\egal {\hbox{~{\bf =}~}}
\def \noi {\noindent}
\def \ss {\smallskip}
\def \sni {\ss\noi}
\def \ms {\medskip}
\def \mni {\ms\noi}
\def \bs {\bigskip}
\def \bni {\bs\noi}
\def \hs {\qquad}
\def \snic#1 {\sni\centerline{$#1$}\ss}
\def \snif#1#2#3 {\vspace{#1}\noindent\centerline{$#3$}\vspace{#2}}
\def \npb{\nopagebreak}
\def \E {{{\sf E}}\,}
\def \Faux {{{\sf Faux}}\,}
\def \ou {{\;{\sf ou}\;}}
\def \im {\vdash}
\def \imp {\im\;}
\def \impl {\;\imp}
\def \bu {{$\bullet$}}
\def \hbu {{\bu\hs}}
\def \ax#1#2#3#4{\vspace{.1cm} \begin{tabular}{p{#1}ll}
               \hbu $#2\impl#3$ & $(#4)$ \end{tabular}}
\def \axi#1#2#3{\ax{12cm}{#1}{#2}{#3}}
\def \axio#1#2#3{\axi{#1}{#2}{#3} \vspace{.1cm}}
\def \sd#1#2 {\widetilde#1_#2 }
\def \ZZ{\mathbb{Z}}
\def \NN{\mathbb{N}}
\def \RR{\mathbb{R}}
\def \gx{{\bf x}}
\def \gy{{\bf y}}
\def \gu{{\bf u}}
\def \gt{{\bf t}}
\def \gA{{\bf A}}
\def \gB{{\bf B}}
\def \gK{{\bf K}}
\def \gL{{\bf L}}
\def \cD {{\cal D}}
\def \cI {{\cal I}}
\def \cP {{\cal P}}
\def \cM {{\cal M}}
\def \cT {{\cal T}}
\def \cL {{\cal L}}
\def \rP {{\rm P}}
\def \U#1{{\rm Ndz}_{#1}}
\def \J#1{{\rm Ide}_{#1}}
\def \Rzero {R_{=0}}
\def \Ru#1{{R_{\U#1}}}
\def \Rj#1{{R_{\J#1}}}
\def \Zg {{\ZZ[G]}}
\def \Izero {{\cI}_{=0}}
\def \Ij#1{{\cI}_{\J#1}}
\def \Mu#1{{\cM}_{\U#1}}
\def \Kxn {\gK[x_1,\ldots,x_n]}
\def \Lxn {\gL[x_1,\ldots,x_n]}
\def \Kx {\gK[\gx]}
\def \Lx {\gL[\gx]}
\def \Ax {{\rm A}}
\def \Ac {{\rm Aco}}
\newcommand \gui[1] {``#1''}
\newcommand \tho {théorème }
\newcommand \thos {théorèmes }
\def \ddk {dimension de Krull }
\def \dfn {définition }
\def \dfns {définitions }
\def \homo {homomorphisme }
\def \ndz {non diviseur de zéro }
\def \tdy {théorie \dy }
\def \tdys {théories \dys }
\def \dy {dynamique }
\def \dys {dynamiques }
\def \dyz {dynamique}
\def \dysz {dynamiques}
\def \rdyz {règle \dyz}
\def \rdys {règles \dys }
\def \rdysz {règles \dysz}
\def \evd {évaluation \dy }
\def \evdz {évaluation \dyz}
\def \evdsz {évaluations \dysz}
\def \sad {structure algébrique \dy }
\def \sads {structures algébriques \dys }
\def \sadz {structure algébrique \dyz}
\def \tfp {théorie formelle du premier ordre }
\def \cacz {corps algébriquement clos}
\def \tcg {théorème de complétude de G\"odel } 
\def \tcgz {théorème de complétude de G\"odel}
\def \nst {Nullstellensatz }
\def \nstz {Nullstellensatz}
\def \nsts {Nullstellens\"atze }
\def \nstsz {Nullstellens\"atze}
\def \ssi {si et seulement si }
\def \cnes {condition nécessaire et suffisante }
\def \cad {c'est-à-dire }
\def \spdg {sans perte de généralité }
\def \clama {\maths classiques }
\def \clamaz {\maths classiques}
\def \maths {mathématiques }
\def \cofz {constructif}
\def \ideps {idéaux premiers }
\def \idepsz {idéaux premiers}
\def \pol {polynôme }
\def \pols {polynômes }
\begin{document}
\title{ Dimension de Krull, Nullstellens\"atze\\ 
et \'Evaluation Dynamique }
\author{
Henri Lombardi
\thanks{
Equipe de Mathématiques, CNRS UMR 6623, UFR des Sciences et
Techniques,
Université de Franche-Comté, 25 030 BESANCON cedex, FRANCE,
email: henri.lombardi@univ-fcomte.fr}
}
\date{septembre 2000}
\maketitle

Cet article est paru dans la revue {\em Mathematische Zeitschrift},
{\bf 242}, 23--46, (2002). La version finale a été donnée en septembre 2000.

J'ai ajouté une petite {\bf Note historique} en postface page \pageref{notehistorique}, et une {\bf Table des matières}.

\begin{abstract}
Nous démontrons un \nst qui établit une équivalence entre 
l'existence d'une identité algébrique d'un certain type, d'une 
part, et l'impossibilité de trouver une suite croissante de 
variétés irréductibles répondant à certaines contraintes 
d'autre part. De ce point de vue le \nst usuel correspond au cas des 
variétés réduites à un point. 
Nous établissons aussi un \nst formel du même type,  en relation 
avec les suites croissantes d'\idepsz. Un cas particulier important est 
donné par la notion de suite pseudo régulière, plus générale 
que la notion de suite régulière. Nous obtenons de cette manière 
une nouvelle caractérisation de la dimension de Krull d'un anneau~: 
un anneau~a une dimension de Krull $\geq \ell$ \ssi il existe une suite 
pseudo régulière de longueur $\ell$ dans l'anneau.
Dans les cas où ces résultats peuvent avoir une signification 
constructive précise, nos démonstrations y aboutissent constructivement.  
Nous pensons avoir donné ainsi quelques éléments en vue d'une 
interprétation constructive de la théorie de la dimension de Krull 
des anneaux commutatifs. Notre méthode utilise la notion de \sad 
introduite dans des articles précédents.

\bni \centerline {Abstract~~~~~~~~~}

We prove constructively a \nst giving an equivalence between the 
existence of a certain kind of algebraic identity on one hand, 
and the impossibility of finding an increasing sequence of irreducible 
varieties obeying certain constraints on the other hand.  
The ususal \nst corresponds to the case of varieties that are reduced 
to a point.
We settle also a similar formal \nst related to increasing sequences of 
primes. 
An important particular case is given by the notion of
pseudo regular sequence. 
This allows a new characterisation of the Krull dimension of a ring. 
This characterisation via pseudo regular sequences is elementary and 
constructive.
Our method uses dynamical algebraic structures which were introduced in 
previous papers. 
\end{abstract}
\mni MSC : 13C15, 03F65, 13A15, 13E05

\mni Mots clés :  Dimension de Krull, Nullstellensatz, 
Mathématiques constructives, Structures algébriques \dysz.

\mni Key Words : Krull Dimension, Nullstellensatz, Constructive 
Mathematics, Dynamical Algebraic Structures.



\section*{Introduction} \label{sec Introduction}
\addcontentsline{toc}{section}{Introduction}

\markboth{Introduction}{Introduction}
Dans cet article nous nous intéressons à une approche constructive 
des anneaux commutatifs dont la dimension de Krull est au moins égale 
à $\ell$. Le concept est difficile à manipuler constructivement car 
en général on n'a pas un accès explicite aux \ideps d'un anneau 
commutatif. 

\paragraph{Suites pseudo régulières et dimension de Krull}~

\noindent 
Nous introduisons la notion de suite pseudo régulière, qui donne un 
test explicite concernant la dimension de Krull d'un anneau commutatif. 
Nous appelons {\em suite pseudo régulière de $\ell$ éléments} 
(dans un anneau commutatif $\gA$) une suite $(x_1,\ldots,x_\ell)$ telle 
que pour tous $a_1,\ldots,a_\ell\in \gA$ et tous $m_1,\ldots,m_\ell\in 
\NN$ on ait
$$ x_1^{m_1}(x_2^{m_2}\cdots(x_\ell^{m_\ell} (1+a_\ell x_\ell) + 
\cdots+a_2x_2) + a_1x_1) \neq  0
$$
Une suite pseudo régulière de longueur $\ell$ permet de spécifier 
incomplètement une chaîne strictement croissante de $\ell+1$ 
\idepsz, conformément au théorème suivant que nous démontrons 
dans la section  \ref{sec regseq}.

\medskip
\noindent
{\bf Théorème \ref{th.pseudoreg}}
{\it {\em (suites pseudo régulières et chaînes croissantes 
d'\idepsz).}
Dans un anneau $\gA$ une suite $(x_1,\ldots,x_\ell)$ est pseudo 
régulière \ssi il existe  $\ell+1$ \ideps $P_1\subset \cdots\subset 
P_\ell\subset P_{\ell+1}$ avec $x_1\in P_2\setminus P_1$, $x_2\in 
P_3\setminus P_2, \ldots$  $x_\ell\in P_{\ell+1}\setminus P_\ell$.
 }

\medskip Etant donnée une suite pseudo régulière de longueur 
$\ell$ notre démonstration ne peut pas fournir un procédé de construction 
explicite des \ideps $P_1,\ldots,P_{\ell+1}$, sauf dans certains cas 
particuliers (voir l'énoncé complet du théorème 
\ref{th.pseudoreg} dans la section  \ref{sec regseq}). 

\paragraph{Nouveaux Null\-stellen\-s\"atze}~

\noindent 
La description de la dimmension de Krull à travers les suites pseudo 
régulières apparaît comme un cas particulier d'un  \nst formel 
plus général démontré dans la section \ref{sec colsim}.

\medskip\noindent
{\bf Théorème \ref{th.nstformel}}
{\it  {\em (\nst formel pour les chaînes d'\idepsz, en \clamaz).} 
Soit $\gA$ un anneau et $J_1,\ldots,J_\ell$,  $U_1, \ldots, U_\ell$  
$2\ell$ parties de $\gA$. Les deux propriétés suivantes sont 
équivalentes{\footnote{Dans tout l'article le symbole
 $\subset$ est mis pour l'inclusion au sens large.}}:
\begin{itemize}
\item [$(a)$] il n'existe pas $\ell$ \ideps $P_1\subset \cdots\subset 
P_\ell$  tels que
 $J_i\subset P_i$, $U_i\cap P_i=\emptyset $, $(i=1,\ldots,\ell)$, 
\item [$(b)$] il existe $j_i$ dans l'idéal engendré par $J_i$, 
$(i=1,\ldots,\ell)$, $u_i$ dans le monoïde engendré par $U_i$, 
$(i=1,\ldots,\ell)$, vérifiant l'égalité  
$$u_1(\ldots(u_\ell+j_\ell)+\ldots)+j_1=0\eqno (*)$$ 
\end{itemize}
}

Ce résultat, énoncé de manière aussi générale, ne peut pas 
être constructif. Nous en donnons deux versions constructives, une 
générale sous forme d'un collapsus simultané
(théorème \ref{th.colsim}), une autre particulière lorsque 
l'anneau $\gA$ vérifie des hypothèses adéquates quant à la 
possibilité d'y mener des calculs sur les \ideps (théorème 
\ref{th.nstformel2}).

\medskip Nous démontrons enfin dans la section \ref{sec icocl} le 
\nst suivant dans le cas géométrique.

\medskip\noindent
{\bf Théorème \ref{th.nst}.}
{\it  Soit $\gK$ un corps contenu dans un corps algébriquement clos 
discret $\gL$. Soient $J_1,\ldots,J_\ell,U_1,\ldots,U_\ell$ $2\ell$ 
familles finies de \pols de $\Kxn=\Kx$. \\
Soient $\cI_i$ l'idéal de $\Kx$ engendré par $J_i$ et $\cM_i$ le 
monoïde engendré par $U_i$, $(i=1,\ldots,\ell)$.\\
On~a alors l'équivalence suivante
\begin{itemize}
\item [$(a)$] Il n'existe pas $\ell$ variétés irréductibles 
$V_\ell\subset \cdots\subset V_1\subset \gL^n$ qui vérifient les 
conditions suivantes
$$\forall f\in J_i\;\;  f|_{V_i}=0|_{V_i},\; \; \; \; \forall g\in 
U_i\; \;  g|_{V_i}\neq 0|_{V_i}\; \;\; \;  (i=1,\ldots,\ell)
$$
\item [$(b)$] Il existe des $u_i\in \cM_i$ et  $j_i\in \cI_i$ 
$(i=1,\ldots,\ell)$ tels que
$$ u_1(\ldots(u_\ell+j_\ell)+\ldots)+j_1= 0
$$
\end{itemize}
En outre on a un procédé explicite qui construit, à partir de la 
donnée  $J_1,\ldots,J_\ell,U_1,\ldots,U_\ell$, ou bien une identité 
algébrique comme en $(b)$ (si une telle identité existe), ou bien 
(dans l'autre cas) un système fini de générateurs pour chacun des 
\ideps correspondant aux variétés irréductibles vérifiant les 
conditions décrites au point $(a)$.
}

\paragraph{La méthode dynamique}~

\noindent 
D'un point de vue dynamique (l'annexe reprend, pour la commodité de 
la lecture, des définitions relatives aux \sads données dans 
\cite{clr} et \cite{lom98}), on accepte de ne pas spécifier 
complètement la chaîne croissante d'\idepsz, et on s'intéresse 
aux conditions dans lesquelles une telle structure collapse.

Un idéal premier est incomplètement spécifié par la donnée 
d'éléments dans l'idéal et d'éléments dans la partie 
multiplicative complémentaire de l'idéal.

Si ensuite on retourne dans une situation concrète où les 
chaînes croissantes d'\ideps sont complètement explicitables, on 
obtient un \nst effectif. C'est le cas de l'anneau $\Lxn$ lorsque $\gL$ 
est un \cacz. 

Une \sad est une {\em structure algébrique incomplètement 
spécifiée}. Pour cela on donne les axiomes que doit vérifier la 
structure, des générateurs et des relations. Le fait que la 
structure est {\em  algébrique} tient à ce que les axiomes ont une 
forme particulièrement simple. Ils peuvent être mis en œuvre à 
l'intérieur d'un calcul arborescent dans lequel la logique sous sa 
forme usuelle sophistiquée ne joue plus aucun role. Aucune formule 
compliquée avec connecteurs ou quantificateurs n'est mise en œuvre. 
Seuls apparaissent les prédicats donnés au départ dans la 
structure. En particulier, on peut éviter tout recours à la 
négation. Et le calcul mis en œuvre est, le long de chaque branche 
du calcul arborescent, de l'algèbre pure, en fait surtout de la 
réécriture. L'arborescence elle-même est gouvernée par des 
axiomes simples. Elle est pour l'essentiel une discussion cas par cas.

\medskip Cet article constitue une confirmation de la souplesse 
d'utilisation des \sads 
(cf.  \cite{clr,KL,lom95,lom97,lom98,lom99,lom99a,lq99}). 
La plupart des théorèmes d'algèbre abstraite, dans la démonstration 
desquels l'axiome du choix et le principe du tiers exclu semblent 
offrir un obstacle sérieux à une interprétation explicite en 
termes de calculs algébriques, auraient ainsi une version dynamique 
constructive, à partir de laquelle l'utilisation du \tcg fournirait 
la version classique abstraite. Lorsque ces théorèmes abstraits 
sont ensuite utilisés pour prouver l'existence d'objets concrets, la 
version dynamique donnera alors ipso facto une construction explicite 
de ces objets concrets.

\section{Théorie dynamique des chaînes croissantes d'\idepsz} 
\label{sec tdychcr}
Nous donnons tout le traitement avec $\ell=3$, laissant au lecteur le 
soin de remplacer éventuellement $3$ par $\ell$ et de rajouter les 
{\em trois petits points} aux endroits nécessaires{\footnote{S'il 
le désire, il pourra même faire des démonstrations par récurrence sur 
$\ell$. Il aura alors la satisfaction secrète de savoir que ses 
énoncés sont prouvés rigoureusement pour tout entier $\ell$ 
même dans le cas où l'on dispose d'entiers non standard.}}.

Lorsqu'on examine des structures algébriques ``au dessus'' d'une 
structure d'anneau commutatif, on peut se passer de la relation 
d'égalité binaire et de la plupart des axiomes des anneaux 
commutatifs. On introduit le prédicat unaire $x=0$ et on considère 
que $x=y$ est une abréviation pour $x-y=0$. Les axiomes usuels des 
anneaux commutatifs sont alors remplacés par du calcul algébrique 
dans les anneaux de \pols à coefficients entiers~: on considère 
que tout terme doit être réécrit sous une forme réduite dans 
l'anneau des \pols correspondant. Par exemple l'axiome $a+b=b+a$ 
est remplacé par le calcul qui réduit 
$(a+b)-(b+a)$  à $0$  dans $\ZZ[a,b]$. Les seuls axiomes qui restent 
nécessaires sont alors les suivants.

\axi{}{0=0}{\Ac_1}   

\axi{(x=0,\; y=0)}{x+y=0}{\Ac_2}   

\axi{x=0}{ax=0}{\Ac_3}

\axio{1=0}{\Faux}{\Ac_{col}}

La \tdy qu'on obtient est dite directe si en outre tous les prédicats 
sont unaires et tous les axiomes sont ``algébriques directs'' en un 
sens convenable (pour plus de détails, voir \cite{clr}). Les \tdys 
directes sont particulièrement bien adaptées aux démonstrations de \nstsz.

Rappelons qu'un \nst est un théorème qui affirme que certains 
systèmes de conditions algébriques sont incompatibles si et 
seulement si il existe un certificat algébrique simple 
d'incompatibilité. Par exemple le \nst faible usuel dit qu'un 
système d'équations polynomiales n'admet aucun zéro (dans une 
cloture algébrique du corps des coefficients) \ssi $1$ est dans 
l'idéal engendré par les équations.

La \tdy des anneaux commutatifs avec trois \ideps emboités est 
basée sur la théorie directe $\cP_{3,dir}$ suivante.

Le langage $\cL_{\cP,3}$ est celui des anneaux commutatifs, avec le 
prédicat unaire $x=0$ et six prédicats unaires $\J1(x)$, $\U1(x)$, 
$\J2(x)$, $\U2(x)$, $\J3(x)$, $\U3(x)$. Si $P_1\subset P_2\subset P_3$ 
sont les trois \ideps que nous avons en vue, $\J{i}(x)$ signifie 
$x\in P_i$ et  $\U{i}(x)$ signifie $x\notin P_i$ ou plutot que $x$ est 
\ndz modulo $P_i$.

Nous donnons les axiomes de cette théorie directe sous forme de trois 
groupes distincts, réservant au troisième groupe, pour des raisons 
de commodité dans l'exposé qui suit, certains axiomes pour les 
parties multiplicatives.

Le premier groupe d'axiomes, noté $(\Ax_0)$ est formé par les 4 
axiomes des anneaux commutatifs non triviaux.

\axi{}{0=0}{\Ac_1}   

\axi{(x=0,\; y=0)}{x+y=0}{\Ac_2}   

\axi{x=0}{ax=0}{\Ac_3}

\axio{1=0}{\Faux}{\Ac_{col}}

Le deuxième groupe d'axiomes, noté $(\Ax_1)$ est le suivant. Un 
axiome indiqué avec un indice $i$ est répété pour $i=1,2,3$, 
sauf si $i+1$ intervient également dans l'axiome, auquel cas l'axiome  
est répété pour $i=1,2$.

\axi{(x=0)}{\J{i}(x)}{\rP_{3,dir,1,i}}

\axi{(\J{i}(x),\; \J{i}(y))}{\J{i}(x+y)}{\rP_{3,dir,2,i}}

\axi{\J{i}(x)}{\J{i}(ax)}{\rP_{3,dir,3,i}}

\axi{\J{i}(x)}{\J{i+1}(x)}{\rP_{3,dir,4,i}}

\axi{\J{i}(1)}{\Faux}{\rP_{3,col,1,i}}

\axi{}{\U{i}(1)}{\rP_{3,dir,5,i}}

\axi{(\U{i}(x),\; \U{i}(y))}{\U{i}(xy)}{\rP_{3,dir,6,i}}

\axio{(x=0,\; \U{i}(y))}{\U{i}(x+y)}{\rP_{3,dir,7,i}}

\noi Le troisième groupe d'axiomes, noté $(\Ax_2)$ est le suivant.

\axi{\U{i}(0)}{\Faux}{\rP_{3,col,2,i}}

\axi{(\J{i}(x),\; \U{i}(y))}{\U{i}(x+y)}{\rP_{3,dir,8,i}}

\axi{\U{i+1}(x)}{\U{i}(x)}{\rP_{3,dir,9,i}}

\begin{notation} 
\label{nota.tdir}
{\rm  Nous notons $\cP_{3,dir}$ la théorie directe $(\cL_{\cP_3}\vert 
\Ax_0,\Ax_1,\Ax_2)$.
} 
\end{notation}
On~a le lemme suivant qui sera utilisé implicitement dans la suite.
\begin{lemma} 
\label{lem.col0} Pour une \sad de type $\cP_{3,dir}$ si on a établi 
deux faits $\U{i}(t)$ et $\J{i}(t)$ pour un même terme $t$, cela 
collapse.
\end{lemma}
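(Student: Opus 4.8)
\emph{Esquisse de démonstration.} Le plan est de fabriquer, à partir des deux faits supposés $\J{i}(t)$ et $\U{i}(t)$, une dérivation explicite du fait $\Faux$ dans la théorie directe $\cP_{3,dir}$. Puisqu'une \sad de ce type est close sous l'application des axiomes, il suffit d'enchaîner quelques instances d'axiomes bien choisies.

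Je procéderais en trois pas. D'abord, en appliquant l'axiome $\rP_{3,dir,3,i}$ (\cad $\J{i}(x)\vdash\J{i}(ax)$) au fait $\J{i}(t)$ avec le coefficient $a=-1$, on produit le fait $\J{i}(-t)$. Ensuite, en appliquant l'axiome $\rP_{3,dir,8,i}$ (\cad $(\J{i}(x),\U{i}(y))\vdash\U{i}(x+y)$) aux deux faits $\J{i}(-t)$ et $\U{i}(t)$, on obtient le fait $\U{i}((-t)+t)$ ; or le terme $(-t)+t$ se réécrit en $0$ dans l'anneau des \pols à coefficients entiers sous-jacent, de sorte que ce fait n'est autre que $\U{i}(0)$. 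Enfin, l'axiome collapsus $\rP_{3,col,2,i}$ (\cad $\U{i}(0)\vdash\Faux$) fournit $\Faux$, ce qui est la conclusion voulue.

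Il n'y a ici aucun obstacle véritable : toute la preuve consiste à reconnaître la bonne chaîne d'axiomes, et le seul point à surveiller est l'identification de $\U{i}((-t)+t)$ avec $\U{i}(0)$, qui relève uniquement de la convention -- rappelée au début de la section -- selon laquelle tout terme est automatiquement réécrit sous sa forme réduite dans l'anneau de \pols correspondant. Comme l'argument est uniforme en $i$ et n'utilise à aucun moment la longueur de la chaîne, il vaut à l'identique pour la théorie $\cP_{\ell,dir}$ quel que soit $\ell$.
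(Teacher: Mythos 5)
Votre démonstration est correcte et suit exactement le même chemin que celle de l'article~: $\J{i}(-t)$ par $(\rP_{3,dir,3,i})$ avec $a=-1$, puis $\U{i}(0)$ par $(\rP_{3,dir,8,i})$, et enfin le collapsus par $(\rP_{3,col,2,i})$. Rien à redire.
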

\begin{proof}{Démonstration.} On~a en effet $\J{i}(-t)$ par $(\rP_{3,dir,3,i})$ 
donc $\U{i}(t+(-t))$ par $(\rP_{3,dir,8,i})$ c.-à-d. $\U{i}(0)$
\end{proof}
Pour obtenir la \tdy correspondant vraiment à un sytème de trois 
\ideps emboités nous devons rajouter le groupe d'axiomes suivant, 
noté $(\Ax_5)$.

\axio{}{(\J{i}(x)\; \ou\;  \U{i}(y))}{\rP_{3,dyn,1,i}}

Si enfin nous réclamons que l'emboitement des idéaux soit strict, 
nous rajouterons le groupe d'axiomes suivant, noté $(\Ax_6)$.  

\axio{}{\E x_i\;\;   (\J{i+1}(x_i), \; \U{i}(x_i)) }{\rP_{3,dyn,2,i}}

\begin{notation} 
\label{nota.tdyn}
{\rm  Nous noterons \\
$\cP_{3,dyn1}=(\cL_{\cP_3}\vert \Ax_0,\Ax_1,\Ax_2,\Ax_5)$ et  
$\cP_{3,dyn2}=(\cL_{\cP_3}\vert \Ax_0,\Ax_1,\Ax_2,\Ax_5,\Ax_6)$.
} 
\end{notation}

En pratique, on sera plutot intéressé par des \sads de type 
$\cP_{3,dyn1}$ avec une présentation pour laquelle on peut trouver 
deux termes $u$ et $v$ tels que  $\J2(u), \U1(u), \J3(v), \U2(v)$ 
soient des faits prouvables.

\begin{lemma} 
\label{lem.rsimp}
La théorie $\cP_{3,dyn1}$ prouve les {\em règles de simplification} 
suivantes~: tout d'abord un groupe d'axiomes  $(\Ax_3)$

\axio{(\J{i}(xy),\;  \U{i}(y))}{\J{i}(x)}{\rP_{3,sim,1,i}}

\noi ensuite un groupe d'axiomes  $(\Ax_4)$

\axi{\U{i}(xy)}{(\U{i}(x),\; \U{i}(y))}{\rP_{3,sim,2,i}}

\axi{\J{i}(x^2)}{\J{i}(x)}{\rP_{3,sim,3,i}}
\end{lemma}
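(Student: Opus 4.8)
The plan is to derive each of the three simplification rules by a single case split using the dynamic axiom $(\rP_{3,dyn,1,i})$ and then closing the ``bad'' branch with Lemma \ref{lem.col0}. Recall that in the dynamical setting a fact is proved by a theory when, in the evaluation tree built from the hypotheses, every leaf either already contains that fact or is collapsed; so for each rule it suffices to exhibit such a two-branch computation. Throughout I use the instance $\J{i}(x)\ou\U{i}(x)$ of $(\rP_{3,dyn,1,i})$, which is the only feature of $\cP_{3,dyn1}$ not available in the direct theory $\cP_{3,dir}$.

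For $(\rP_{3,sim,1,i})$: from the hypotheses $\J{i}(xy)$ and $\U{i}(y)$, open the case split $\J{i}(x)\ou\U{i}(x)$. On the branch carrying $\J{i}(x)$ we are done. On the branch carrying $\U{i}(x)$, combine it with $\U{i}(y)$ by $(\rP_{3,dir,6,i})$ to obtain $\U{i}(xy)$; together with the hypothesis $\J{i}(xy)$ this collapses by Lemma \ref{lem.col0}.

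For $(\rP_{3,sim,2,i})$: from $\U{i}(xy)$, to prove $\U{i}(x)$, open the case split on $x$; the branch carrying $\U{i}(x)$ gives the conclusion, and on the branch carrying $\J{i}(x)$ one gets $\J{i}(xy)$ by $(\rP_{3,dir,3,i})$ (with $a=y$), which collapses against $\U{i}(xy)$ by Lemma \ref{lem.col0}. The fact $\U{i}(y)$ is obtained symmetrically, rewriting $xy$ as $yx$ first. For $(\rP_{3,sim,3,i})$: from $\J{i}(x^2)$, open the case split on $x$; the $\J{i}(x)$ branch is the conclusion, and on the $\U{i}(x)$ branch $(\rP_{3,dir,6,i})$ gives $\U{i}(x\,x)=\U{i}(x^2)$, which collapses against $\J{i}(x^2)$.

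The only point requiring care is bookkeeping: one must check that each closing branch genuinely produces both $\U{i}(t)$ and $\J{i}(t)$ on the \emph{same} term $t$ (here $t=xy$, $t=xy$, $t=x^2$ respectively) so that Lemma \ref{lem.col0} legitimately applies, and that the rewriting identifying $x\,x$ with $x^2$, resp. $xy$ with $yx$, is the kind of integer-polynomial reduction already built into the axioms $(\Ac_*)$. Everything else is immediate from the direct axioms, so no serious obstacle is expected.
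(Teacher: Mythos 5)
Votre démonstration est correcte et suit essentiellement la même voie que celle de l'article~: ouverture de deux branches via l'axiome $(\rP_{3,dyn,1,i})$, puis mort de la mauvaise branche parce qu'on y établit $\J{i}(t)$ et $\U{i}(t)$ pour un même terme $t$ (lemme \ref{lem.col0}), ce qui est exactement le calcul que l'article effectue explicitement pour la première règle en laissant les deux autres au lecteur. Vous explicitez en plus les deuxième et troisième règles, sans différence de fond.
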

\begin{proof}{Démonstration.}
Par exemple pour la première règle. On suppose $\J1(xy),\;  \U1(y)$ 
on ouvre deux branches, la première avec $\J1(x)$ et la deuxième 
avec $\U1(x)$. Dans la deuxième on a $\U1(xy)$ et $\J1(-xy)$ donc 
$\U1(xy+(-xy))$ c.-à-d. $\U1(0)$ et elle meurt.
\end{proof}
\begin{notation} 
\label{nota.tsim}
{\rm  Nous noterons $\cP_{3,sim1}=(\cL_{\cP_3}\vert 
\Ax_0,\Ax_1,\Ax_2,\Ax_3,\Ax_4)$ et  $\cP_{3,sim2}=(\cL_{\cP_3}\vert 
\Ax_0,\Ax_1,\Ax_3)$. 
} 
\end{notation}
Une structure algébrique ordinaire qui vérifie les axiomes de 
$\cP_{3,sim2}$ est donnée par un anneau commutatif, trois idéaux 
emboités $I_1\subset I_2\subset I_3$ et  trois monoïdes 
multiplicatifs $\; S_1,\; S_2,\; S_3$. Les éléments de $S_i$ 
doivent être non diviseurs de zéro modulo $I_i$. Un exemple est 
donné en prenant pour $S_i$ tous les non diviseurs de zéro modulo 
$I_i$. Par contre on ne demande pas d'inclusion entre les $S_i$, et les 
idéaux ne sont pas nécessairement premiers.
\section{Collapsus simultanés et Null\-stellen\-s\"atze formels} 
\label{sec colsim}

Dans une \tdy comme $\cP_{3,dir}$, on peut donner une description 
simplifiée pour la présentation d'une \sad $S$. L'ensemble des 
générateurs est $G$ et les relations de départ peuvent être 
données par 7 parties de $\Zg$, $\Rzero$, $\Rj{i}$,  $\Ru{i}$  
$(i=1,2,3)$. La partie  $\Rzero$ donne l'ensemble des termes $t\in\Zg$ 
pour lesquels on a la relation de départ $t=0$, la partie  $\Rj1$ 
donne l'ensemble des termes $t\in\Zg$ pour lesquels on a la relation de 
départ $\J1(t)$, etc...
Le lemme suivant est à la fois très simple et fondamental.

\begin{lemma} 
\label{lem.coll} Soit $S=(\cP_{3,dir},(G\vert R))$ avec 
$R=(\Rzero,\Rj1,\ldots,\Ru3)$ une \sad de type $\cP_{3,dir}$. 
Notons $\Mu{i}$ le monoide (multiplicatif) dans $\Zg$ engendré par 
$\Ru{i}$ $(i=1,2,3)$. 
Notons  $\Izero$ l'idéal de $\Zg$ engendré par $\Rzero$ et 
$\Ij{i}$ l'idéal de $\Zg$ engendré par $\Rj{i}$  $(i=1,2,3)$.\\
Alors la \sad $S$ collapse si et seulement si on a une identité 
algébrique du type suivant dans $\Zg$
$$u_1(u_2(u_3+j_3)+j_2)+j_1+j_0\egal u_1u_2u_3+u_1u_2j_3+u_1j_2+j_1+j_0
\egal 0
$$
avec $u_i\in\Mu{i}$ et $j_i\in\Ij{i}$ pour $(i=1,2,3)$ et $j_0\in 
\Izero$.
\end{lemma}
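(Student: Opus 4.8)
The plan is to prove both implications by a direct analysis of the tree calculus of the direct theory $\cP_{3,dir}$. Recall that a \sad collapses iff the fact $\Faux$ is derivable along every branch of some finite proof tree built from the axioms $\Ax_0, \Ax_1, \Ax_2$ applied to the generators $G$ and the starting relations $R = (\Rzero, \Rj1, \ldots, \Ru3)$. Since $\cP_{3,dir}$ is a \emph{direct} theory, every branch of such a tree is deterministic pure algebra: each node asserts a unary predicate ($t=0$, $\J{i}(t)$, or $\U{i}(t)$) about some term $t \in \ZZ[G]$, and the only way to produce $\Faux$ is via one of the collapse axioms $\Ac_{col}$, $\rP_{3,col,1,i}$, or (after Lemma~\ref{lem.col0}) the situation $\U{i}(t) \wedge \J{i}(t)$, which reduces to $\U{i}(0)$ via $\rP_{3,col,2,i}$. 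So collapse ultimately means: one can derive $t = 0$ for a term that is (a unit, i.e.) of the form forcing $\Ac_{col}$ or $\rP_{3,col}$ to fire, or one can derive $\U{i}(t)$ and $\J{i}(t)$ for the same $t$.

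For the implication "identity $\Rightarrow$ collapse", I would run the calculus to manufacture exactly the term $u_1u_2u_3 + u_1u_2j_3 + u_1j_2 + j_1 + j_0$ and show it must be $=0$-provable, whence $\Ac_{col}$-type collapse follows. Concretely: from $j_0 \in \Izero$ one gets $j_0 = 0$ by $\Ac_1, \Ac_2, \Ac_3$; from $j_i \in \Ij{i}$ one gets $\J{i}(j_i)$ by $\rP_{3,dir,1\text{--}3,i}$; from $u_i \in \Mu{i}$ one gets $\U{i}(u_i)$ by $\rP_{3,dir,5,6,i}$. Then I would assemble these: using $\rP_{3,dir,3,i}$ (closure of $\J{i}$ under multiplication by ring elements) I get $\J3(u_1u_2j_3)$, then by $\rP_{3,dir,4,i}$ (the $\J{i}\Rightarrow\J{i+1}$ monotonicity axioms) and $\rP_{3,dir,2,i}$ I can combine all the $j$-terms into a single $\J3$-fact. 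Symmetrically, using $\rP_{3,dir,6,i}$ I get $\U3(u_1u_2u_3)$ (after pushing $u_1,u_2$ down via $\rP_{3,dir,9,i}$ to level $3$). Now $\rP_{3,dir,7,i}$ and $\rP_{3,dir,8,i}$ let me absorb the $j$-part and the $=0$ part into the $\U3$-fact, yielding $\U3$ of the whole sum; but the sum is $0$ by hypothesis, so we have $\U3(0)$, i.e. collapse by $\rP_{3,col,2,3}$ (or Lemma~\ref{lem.col0}). The bookkeeping of which monotonicity axiom ($\rP_{3,dir,4}$ vs $\rP_{3,dir,9}$) applies to which factor is the fiddly part, but each step is a single named axiom.

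For the converse "collapse $\Rightarrow$ identity", the plan is an induction on the size of the collapsing proof tree, proving the stronger statement that whatever fact $F$ a node asserts, that fact is "witnessed" by an algebraic certificate of the appropriate shape: if $F$ is $t=0$ then $t \in \Izero$ (modulo lower pieces); if $F$ is $\J{i}(t)$ then $t$ lies in an ideal generated by $\Izero$ together with $\Ij1,\ldots,\Ij{i}$; if $F$ is $\U{i}(t)$ then $t$ has the form $u_i(\cdots) + (\text{ideal part from levels} \le i) + (\text{element of }\Izero)$ with $u_i \in \Mu{i}$ — i.e. exactly the left-hand side of the displayed identity truncated at level $i$. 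One checks this invariant is preserved by every axiom of $\Ax_0, \Ax_1, \Ax_2$: the ring axioms are absorbed because we work modulo polynomial identities in $\ZZ[G]$; the $\J{i}$-axioms manipulate ideals; $\rP_{3,dir,4,i}$ corresponds to enlarging the ideal at the next level; $\rP_{3,dir,7,i}$ and $\rP_{3,dir,8,i}$ correspond precisely to the nested $u_i(\cdots) + j_i$ construction; and $\rP_{3,dir,9,i}$ to multiplying the whole level-$(i+1)$ expression by $u_i$. When the tree terminates in $\Ac_{col}$ ($1=0$), $\rP_{3,col,1,i}$ ($\J{i}(1)$), or $\rP_{3,col,2,i}$ ($\U{i}(0)$), the invariant applied to that terminal fact says precisely that $1$, resp. $0$, equals an expression of the stated form, and a short rearrangement (absorbing a unit, or reading off the coefficients) produces the identity $u_1u_2u_3 + u_1u_2j_3 + u_1j_2 + j_1 + j_0 = 0$.

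The main obstacle I expect is \emph{formulating the invariant precisely enough} in the $\U{i}$ case: the shape "$u_i \cdot (\text{stuff at level } i+1) + j_i + \ldots$" must be strong enough to be preserved under $\rP_{3,dir,6,i}$ (product of two $\U{i}$-terms — here one multiplies two such nested expressions and must renormalize the product back into the canonical nested form, which is where the key algebraic identity $u_1(u_2(u_3+j_3)+j_2)+j_1+j_0 = u_1u_2u_3 + u_1u_2j_3 + u_1j_2+j_1+j_0$ of the statement gets used) and under $\rP_{3,dir,8,i}$ and $\rP_{3,dir,9,i}$, while still being weak enough to actually hold at the leaves. Getting this single definition right makes the induction essentially mechanical; getting it slightly wrong forces one to strengthen it mid-proof. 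Everything else is the routine "direct theories give \nstsz" pattern already exploited in the cited papers.
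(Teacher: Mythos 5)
Your overall strategy coincides with the paper's: the paper proves the lemma precisely by characterizing, for each predicate, exactly which facts are provable in the direct calculus ($t=0$ provable iff $t\in\Izero$; $\J{i}(t)$ iff $t\in\Izero+\Ij1+\cdots+\Ij{i}$; $\U3(t)$ iff $t=u_3+k_3$; $\U2(t)$ iff $t=u_2(u_3+k_3)+k_2$; $\U1(t)$ iff $t=u_1(u_2(u_3+k_3)+k_2)+k_1$), the delicate point being that these shapes are closed under the relevant operations, in particular under products for the $\U{i}$-shapes. That is exactly the invariant, and exactly the obstacle, you identify for your converse direction, so the hard half of your plan is the paper's proof, and your handling of the three kinds of terminal collapse facts ($1=0$, $\J{i}(1)$, $\U{i}(0)$) is a harmless variant of the paper's preliminary reduction to the single collapse axiom $\U1(0)$.

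There is, however, a step in your easy half (identity $\Rightarrow$ collapse) that would fail as written: you propose to obtain $\U3(u_1u_2u_3)$ by pushing $u_1,u_2$ \emph{to level $3$} via $(\rP_{3,dir,9,i})$. That axiom reads $\U{i+1}(x)\impl\U{i}(x)$: it transfers $\U$-facts from level $i+1$ to level $i$, i.e. towards level $1$, never towards level $3$; and $\U3(u_1)$ is genuinely not derivable from $\U1(u_1)$ (in a model, $u_1\notin P_1$ does not give $u_1\notin P_3$ since $P_1\subset P_3$). So the assembly at level $3$, ending in $\U3(0)$, cannot be carried out. The correct assembly nests outward with decreasing index: from $\U3(u_3)$ and $\J3(j_3)$ get $\U3(u_3+j_3)$ by $(\rP_{3,dir,8,3})$, push to $\U2(u_3+j_3)$ by $(\rP_{3,dir,9,2})$, multiply by $u_2$ using $(\rP_{3,dir,6,2})$, add $j_2$ by $(\rP_{3,dir,8,2})$, push to $\U1$, multiply by $u_1$, add $j_1$ by $(\rP_{3,dir,8,1})$ and $j_0$ by $(\rP_{3,dir,7,1})$; since the resulting term reduces to $0$ in $\Zg$, this is $\U1(0)$ and $(\rP_{3,col,2,1})$ collapses the structure. (Equivalently, this direction is the easy inclusion of the very characterization you prove by induction.) With that correction your proposal is essentially the paper's argument.
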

\begin{proof}{Démonstration.} On voit tout d'abord facilement que l'on peut 
supposer que le seul axiome de collapsus est $\; \U1(0)\; \im\; \Faux$. 
On considère alors les faits prouvables par \evd avant un 
collapsus.\\
Notons $\cI_0=\Izero$, $\cI_1=\Izero+\Ij1$, $\cI_2=\Izero+\Ij1+\Ij2$, 
$\cI_3=\Izero+\Ij1+\Ij2+\Ij3$.\\
Vu l'enchainement des axiomes directs, on voit alors que  les 
éléments $t\in\Zg$ pour lesquels le fait $t=0$ est prouvable sont 
exactement les éléments de $\cI_0=\Izero$. Ensuite les éléments 
$t\in\Zg$ pour lesquels le fait $\J1(t)$ est prouvable sont exactement 
les éléments de $\cI_1$. Même chose pour $\J2(t)$ et $\cI_2$, 
puis pour $\J3$ et $\cI_3$.\\  
Ensuite les éléments $t\in\Zg$ pour lesquels le fait $\U3$  est 
prouvable sont exactement les éléments de la forme $u_3+k_3$ avec 
$u_3\in\Mu3$ et $k_3\in\cI_3$ (notez que le produit de deux 
éléments de cette forme est encore un élément de cette forme). 
Puis les éléments $t\in\Zg$ pour lesquels le fait $\U2$  est 
prouvable sont exactement les éléments de la forme 
$u_2(u_3+k_3)+k_2$ avec $u_3\in\Mu3$, $u_2\in\Mu2$, $k_3\in\cI_3$ et 
$k_2\in\cI_2$ (notez à nouveau que le produit de deux éléments de 
cette forme est encore un élément de cette forme). Enfin les 
éléments $t\in\Zg$ pour lesquels le fait $\U1$  est prouvable sont 
exactement les éléments de la forme $u_1(u_2(u_3+k_3)+k_2)+k_1$ 
avec $u_3\in\Mu3$, $u_2\in\Mu2$, $u_1\in\Mu1$, $j_3\in\cI_3$, 
$j_2\in\cI_2$ et $j_1\in\cI_1$.\\
On termine en remarquant que les éléments 
$u_1(u_2(u_3+k_3)+k_2)+k_1$ de la forme précédente sont exactement 
les éléments de la forme $u_1(u_2(u_3+j_3)+j_2)+j_1+j_0$ avec 
$u_i\in\Mu{i}$ et $j_i\in\Ij{i}$ pour $(i=1,2,3)$ et $j_0\in \Izero$.
\end{proof}

Nous sommes maintenant en mesure d'établir un théorème de 
collapsus simultané.

\begin{theorem} {\em (théorème de collapsus simultané).}
\label{th.colsim} Les \tdys $\cP_{3,dir}$, $\cP_{3,dyn1}$, 
$\cP_{3,sim1}$ et $\cP_{3,sim2}$ collapsent simultanément. Les \tdys  
$\cP_{3,dyn1}$ et $\cP_{3,sim1}$ prouvent les mêmes faits. 
\end{theorem}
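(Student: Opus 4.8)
The strategy is to establish, for each pair of theories in the list, that a collapse of one entails a collapse of the other, by tracking the effect of the additional axioms on the set of provable facts; the last assertion (that $\cP_{3,dyn1}$ and $\cP_{3,sim1}$ prove the same facts) will come out of the same analysis. The backbone is Lemma \ref{lem.coll}: for the \emph{direct} theory $\cP_{3,dir}$, collapse is equivalent to an explicit algebraic identity $u_1(u_2(u_3+j_3)+j_2)+j_1+j_0=0$ with $u_i\in\Mu{i}$, $j_i\in\Ij{i}$, $j_0\in\Izero$. Since $\cP_{3,dir}\subseteq\cP_{3,dyn1}\subseteq\cP_{3,sim1}$ in terms of provable facts (the axioms of $(\Ax_5)$ are disjunctive and those of $(\Ax_3),(\Ax_4)$ are derived rules by Lemma \ref{lem.rsimp}), the only real content is to show that the \emph{new} axioms cannot create a collapse that the direct theory did not already have.

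First I would handle $\cP_{3,dir}$ versus $\cP_{3,sim2}$. By Lemma \ref{lem.rsimp} every axiom of $\cP_{3,sim2}=(\cL_{\cP_3}\vert\Ax_0,\Ax_1,\Ax_3)$ is either an axiom of $\cP_{3,dir}$ or a rule provable in $\cP_{3,dyn1}$; but one must be careful, since $\cP_{3,sim2}$ does \emph{not} contain $(\Ax_2)$. So the argument runs the other way: a proof tree in $\cP_{3,sim2}$ uses the simplification rules $(\rP_{3,sim,1,i})$ and $(\rP_{3,sim,2,i})$, $(\rP_{3,sim,3,i})$, and I must show that any such tree leading to $\Faux$ can be converted into a collapse of $\cP_{3,dir}$. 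The cleanest route is to describe directly, for $\cP_{3,sim1}$, the set of terms $t$ for which each predicate $=0$, $\J{i}(t)$, $\U{i}(t)$ is provable, exactly as in the proof of Lemma \ref{lem.coll}, and to check that the simplification rules do not enlarge these sets (e.g. if $\J{i}(xy)$ and $\U{i}(y)$ are provable, so $xy\in\cI_i$ and $y=u+k$ with $u\in\Mu{i}$, $k\in\cI_i$, then one must see that $x$ already lies in $\cI_i$ — this uses that $\Mu{i}$ consists of nonzerodivisors modulo the relevant ideal \emph{in the generic situation}, i.e. that the identity $x(u+k)=m$, $m\in\cI_i$, forces $xu\in\cI_i$, hence — since $\cI_i$ in $\Zg$ is what it is — no new element appears; this is the place where the disjunctive axiom $(\Ax_5)$ is implicitly doing its work, and it is the step I expect to be the main obstacle). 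Once the provable-fact sets for $\cP_{3,sim1}$ coincide with those computed for $\cP_{3,dir}$, collapse (which is provability of $\Faux$, equivalently $\U1(0)$ after the reduction in Lemma \ref{lem.coll}) happens for one iff for the other, and simultaneously the ``same provable facts'' claim follows.

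Next I would treat $\cP_{3,dyn1}$. Here the subtlety is that $(\Ax_5)$ introduces branching: a proof tree may open a case $\J{i}(y)\ou\U{i}(y)$. A collapse of $\cP_{3,dyn1}$ means \emph{every} branch dies. I would argue by induction on the tree: in each branch one has a $\cP_{3,dir}$-proof from the original presentation augmented by finitely many facts of the form $\J{i}(y)$ or $\U{i}(y)$; by Lemma \ref{lem.coll} applied to the augmented presentation, each leaf gives an algebraic identity, and I then combine the identities coming from the two children of a $(\Ax_5)$-node. The combination is the algebraic heart: if adding $\J{i}(y)$ yields identity $E_0$ and adding $\U{i}(y)$ yields identity $E_1$, then in $E_0$ the term $y$ appears inside the ideal part $\cI_i$ while in $E_1$ it appears multiplicatively inside a $\Mu{i}$-factor; multiplying/substituting appropriately (replacing $y$ by a suitable product and using that $j+y\cdot(\text{stuff})$ patterns absorb correctly) produces a single identity in the original presentation, witnessing collapse of $\cP_{3,dir}$. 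This is essentially the standard ``cut-elimination for dynamical theories'' bookkeeping, and it is routine once the pattern of Lemma \ref{lem.coll} is in hand — the one genuinely delicate point, as above, is verifying the absorption works at the level of the ideals $\Ij{i}$ and monoids $\Mu{i}$ in $\Zg$, which is why I would phrase everything in terms of the explicit normal form $u_1(u_2(u_3+j_3)+j_2)+j_1+j_0$ rather than abstractly. Chaining the three equivalences $\cP_{3,dir}\Leftrightarrow\cP_{3,sim2}$, $\cP_{3,dir}\Leftrightarrow\cP_{3,sim1}$, $\cP_{3,dir}\Leftrightarrow\cP_{3,dyn1}$ then gives the theorem.
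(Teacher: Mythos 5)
Your first step rests on a claim that is false: the simplification rules of $(\Ax_3)$ and $(\Ax_4)$ \emph{do} enlarge the sets of provable facts beyond those computed for $\cP_{3,dir}$ in Lemma \ref{lem.coll}. Take the presentation with generators $x,y$ and the two relations $\J1(xy)$, $\U1(y)$: in $\cP_{3,sim1}$ (or $\cP_{3,dyn1}$) the fact $\J1(x)$ is provable by one application of $(\rP_{3,sim,1,1})$, whereas in $\cP_{3,dir}$ the provable facts $\J1(t)$ are exactly the elements of the ideal of $\ZZ[x,y]$ generated by $xy$, which does not contain $x$ (and the structure does not collapse, since no $y^m$ lies in that ideal). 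So the programme of ``checking that the simplification rules do not enlarge these sets'' --- the very step you flag as the main obstacle --- cannot be carried out; what is invariant is only the \emph{collapse} condition, not the stock of provable facts. Relatedly, your opening inclusion $\cP_{3,dyn1}\subseteq\cP_{3,sim1}$ for provable facts is backwards: Lemma \ref{lem.rsimp} gives $\cP_{3,sim1}\subseteq\cP_{3,dyn1}$, and the converse is precisely the nontrivial second assertion of the theorem, which therefore cannot ``come out of the same analysis''. Finally, for the pair $(\cP_{3,dir},\cP_{3,sim2})$ you only discuss turning a collapse of $\cP_{3,sim2}$ into one of $\cP_{3,dir}$; the direction you omit --- that an identity $u_1(u_2(u_3+j_3)+j_2)+j_1+j_0=0$ forces a collapse of $\cP_{3,sim2}$ --- is not automatic, because $\cP_{3,sim2}$ lacks $(\Ax_2)$ (in particular $\U{i}(0)\vdash\Faux$ and $(\rP_{3,dir,8,i})$), and it requires the short explicit derivation of the paper: from $u_1v=-j_1$ with $v=u_2(u_3+j_3)+j_2$ one gets $\J1(v)$ by $(\rP_{3,sim,1,1})$, then climbs to $\J2(u_3+j_3)$, $\J3(u_3)$ and $\J3(1)$. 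The ``same facts'' claim is then obtained not from a fact-set computation but from a separate lemma (Lemma \ref{lem.faitcol}): in $\cP_{3,sim1}$ a fact $\J{i}(t)$ (resp. $\U{i}(t)$) is provable if and only if adding $\U{i}(t)$ (resp. $\J{i}(t)$) produces a collapse; combined with the simultaneous collapse this transfers every fact of $\cP_{3,dyn1}$ down to $\cP_{3,sim1}$.

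Your second step --- combining, at a node created by $(\rP_{3,dyn,1,i})$, the identity obtained from the branch $\J{i}(t)$ with the one obtained from the branch $\U{i}(t)$ by a Rabinovitch-type manipulation --- is the right idea and is exactly Lemma \ref{lem.colsim} of the paper. As written, however, it is only a plan: the substance is the verification that the combined expression re-enters the normal form $u_1(u_2(u_3+j_3)+j_2)+j_1+j_0$ with $u_i\in\Mu{i}$, and this is not uniform in $i$. For $i=1$ the usual Rabinovitch trick suffices, but for $i=2$ (and $3$) the term $t$ sits inside the nesting: one must move $-u_1at$ to the right-hand side, raise to the power $m$, multiply the identity coming from the $\U{i}(t)$-branch by the appropriate factor, and only then check that the product can be rewritten with a new $u''_2(u''_3+j''_3)+j''_2$ inside. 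Until that bookkeeping is done, and the first step is replaced by the two missing ingredients above (identity $\Rightarrow$ collapse of $\cP_{3,sim2}$, and Lemma \ref{lem.faitcol} for the facts), the proposal does not yet prove the theorem.
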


Avant de prouver ce théorème, donnons une version classique 
(obtenue en supposant le \tcgz) pour le collapsus simultané de 
$\cP_{3,dir}$ et $\cP_{3,dyn1}$.

\begin{theorem} 
\label{th.nstformel} {\em (\nst formel pour les chaînes d'\idepsz, 
en \clamaz).} Soit $\gA$ un anneau et $J_1$, $J_2$, $J_3$,  $U_1$, $U_2$, 
$U_3$  six parties de $\gA$. Les deux propriétés suivantes sont 
équivalentes~:
\begin{itemize}
\item [$(a)$] il n'existe pas 3 \ideps $P_1\subset P_2\subset P_3$ tels 
que $J_1\subset P_1$, $U_1\cap P_1=\emptyset $, $J_2\subset P_2$, 
$U_2\cap P_2=\emptyset $, $J_3\subset P_3$, $U_3\cap P_3=\emptyset$
\item [$(b)$] il existe $j_1$ dans l'idéal engendré par $J_1$, 
$j_2$ dans l'idéal engendré par  $J_2$, $j_3$ dans l'idéal 
engendré par  $J_3$, $u_1$ dans le monoïde engendré par $U_1$, 
$u_2$ dans le monoïde engendré par $U_2$, $u_3$ dans le 
monoïde engendré par $U_3$, vérifiant l'égalité  
$$u_1(u_2(u_3+j_3)+j_2)+j_1 = 0. \quad (*)$$ 
\end{itemize}
\end{theorem}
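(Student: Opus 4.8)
**The plan is to derive Theorem \ref{th.nstformel} from the simultaneous collapse theorem \ref{th.colsim} together with the combinatorial description of collapse given by Lemma \ref{lem.coll}, using Gödel completeness to bridge between the dynamical and classical sides.**

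First I would set up the relevant dynamical algebraic structure. Given the ring $\gA$ and the six subsets $J_1,J_2,J_3,U_1,U_2,U_3$, form the presentation $(G\mid R)$ whose generators $G$ are the elements of $\gA$, with $\Rzero$ the set of terms vanishing in $\gA$ (so that the ring structure of $\gA$ is faithfully encoded, including all additive/multiplicative relations), and with $\Rj{i}$ taken to be $J_i$ and $\Ru{i}$ taken to be $U_i$. This yields a \sad $S$ of type $\cP_{3,dyn1}$. The key point is that a \emph{model} of $\cP_{3,dyn1}$ extending this presentation is exactly the data of a quotient-type structure on $\gA$ together with three nested prime ideals $P_1\subset P_2\subset P_3$ satisfying $J_i\subset P_i$ and $U_i\cap P_i=\emptyset$: indeed the axioms $(\rP_{3,dir,1\text{--}7,i})$ and $(\rP_{3,col,1,i})$ force the $\J{i}$-part to be a proper ideal containing $J_i$ and the $\U{i}$-part to be a saturated multiplicative set containing $U_i$ and disjoint from it, axiom $(\rP_{3,dir,4,i})$ gives the inclusions, and the dynamical axiom $(\rP_{3,dyn1,i})$ --- "$\J{i}(x)$ or $\U{i}(y)$" applied with $x=y$ --- together with the simplification rules of Lemma \ref{lem.rsimp} forces the ideal to be prime with multiplicative complement. (If the data force $S$ to collapse this model class is empty.)

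Now the two directions. For $(b)\Rightarrow(a)$: if an identity $(*)$ holds with $j_i$ in the ideal generated by $J_i$ and $u_i$ in the monoid generated by $U_i$, then by Lemma \ref{lem.coll} applied to the presentation above (the constants land in $\cI_0=\Izero$ appropriately), the \sad $S$ collapses, hence $\cP_{3,dyn1}$-collapses, hence by \tcg has no model, so no such chain $P_1\subset P_2\subset P_3$ exists --- that is exactly $(a)$. For $(a)\Rightarrow(b)$: if no such chain exists, then by the model-correspondence above $S$ has no model of type $\cP_{3,dyn1}$, so by \tcg the theory $\cP_{3,dyn1}$ proves a collapse from this presentation; by Theorem \ref{th.colsim} this is equivalent to a collapse in $\cP_{3,dir}$, and by Lemma \ref{lem.coll} a $\cP_{3,dir}$-collapse is precisely an algebraic identity $u_1(u_2(u_3+j_3)+j_2)+j_1+j_0=0$ in $\Zg$. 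Pushing this identity forward along the evaluation $\Zg\to\gA$ (under which $\Izero$ maps to $0$, $\Ij{i}$ into the ideal generated by $J_i$, and $\Mu{i}$ into the monoid generated by $U_i$) kills the $j_0$ term and yields exactly $(*)$ in $\gA$. This proves the equivalence.

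\textbf{The main obstacle} I expect is making the model-theoretic correspondence airtight, i.e.\ verifying carefully that a set-theoretic model of $\cP_{3,dyn1}$ over the presentation $(G\mid R)$ is \emph{exactly} a triple of nested primes with the stated containment and disjointness properties --- in particular checking that axioms $(\rP_{3,dyn1,i})$ plus the simplification rules genuinely force primality (no zero-divisors modulo $P_i$ among the $\U{i}$-elements, and $\U{i}$ saturated), and that no spurious models arise from the weaker "$\ndz$ modulo $P_i$" reading of $\U{i}$ mentioned in the text. Everything else --- the collapse computation via Lemma \ref{lem.coll}, the transfer $\Zg\to\gA$, and the two invocations of \tcg --- is routine once that dictionary is established. (The passage to general $\ell$, with the obligatory \emph{trois petits points}, is immediate.)
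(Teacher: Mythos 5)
Your proposal is correct and follows essentially the same route as the paper's own (first) proof: build the \sad of type $\cP_{3,dyn1}$ from the diagram of $\gA$ together with the $J_i$ et $U_i$, identify the absence of an identity $(*)$ with non-collapse via le lemme \ref{lem.coll} et le théorème \ref{th.colsim}, then invoke le \tcg to produce a model (un anneau $\gB$ muni de trois premiers emboités et d'un \homo $\varphi:\gA\to\gB$) and pull back the primes by $\varphi^{-1}$. The only points the paper makes slightly more explicit are that $(b)\Rightarrow(a)$ is a direct algebraic verification and that the passage from non-collapse of the \sad to the cohérence of the corresponding \tfp rests on le théorème 1.1 de \cite{clr}, a bridge you use implicitly.
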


\begin{proof}{Démonstration du théorème \ref{th.nstformel}.} Il est clair 
qu'une égalité $(*)$ interdit la possibilité des trois \idepsz.  
Si par contre la \sad de type $\cP_{3,dyn1}$ correspondant aux 
données  $\gA$, $J_1$, $J_2$, $J_3$, $U_1$, $U_2$ et $U_3$ ne collapse 
pas (c.-à-d. s'il n'y a pas d'égalité du type~$(*)$), alors la 
\tfp correspondante est cohérente (cf. \cite{clr} théorème 1.1) 
et par le \tcg elle admet un modèle. Ce modèle est donné par un 
anneau $\gB$ avec trois \ideps  $Q_1\subset Q_2\subset Q_3$, avec un 
\homo $\varphi :\gA\rightarrow \gB$ vérifiant $\varphi(J_1)\subset Q_1$, 
$\varphi(U_1)\cap Q_1=\emptyset $, $\varphi(J_2)\subset Q_2$, 
$\varphi(U_2)\cap Q_2=\emptyset $, $\varphi(J_3)\subset Q_3$, 
$\varphi(U_3)\cap Q_3=\emptyset$. Il suffit alors de prendre pour $P_i$ 
les $\varphi ^{-1}(Q_i)$.\\
Une démonstration classique plus usuelle serait la suivante.\\ Considérons 
les systèmes $(J'_1, J'_2, J'_3, U'_1, U'_2, U'_3)$ où les $J'_i$ 
sont des idéaux de $\gA$, les $U'_i$ sont des monoïdes, avec 
$J_{i}\subset J'_{i}$, $U_{i}\subset U'_{i}$, $J'_1\subset J'_2\subset 
J'_3$, $U'_3\subset U'_2\subset U'_1$, avec la condition 
supplémentaire qu'une égalité de type $(*)$ est impossible. 
Ordonnons l'ensemble de ces systèmes par l'ordre produit de l'ordre 
de l'inclusion. On est dans les conditions d'application du lemme de 
Zorn. On considère un système maximal $(J'_1, J'_2, J'_3, U'_1, 
U'_2, U'_3)$. Si on n'a pas $J'_1\cup U'_1=\gA$ soit $t\in 
\gA\setminus(J'_1\cup U'_1)$. Si on rajoute $t$ à $J'_1$, puisque le 
système est maximal, on a une égalité  
$u_1(u_2(u_3+j_3)+j_2)+j_1+at=0$. Si on rajoute $t$ à $U'_1$, puisque 
le système est maximal, on a une égalité  
$t^mu_1(u_2(u_3+j_3)+j_2)+j_1=0$. Ici on fait un calcul algébrique 
qui se trouve dans la démonstration du lemme \ref{lem.colsim} ci-après, pour 
obtenir une contradiction. Même chose dans les cas où $J'_2\cup 
U'_2\neq \gA$ ou $J'_3\cup U'_3\neq \gA$.
\end{proof}
La démonstration précédente montre qu'un contenu constructif du 
théorème \ref{th.nstformel} est exactement le théorème 
\ref{th.colsim}. Plus exactement,  le théorème \ref{th.colsim} et 
le lemme  \ref{lem.coll} (qui sont tous deux constructifs) donnent la 
\cnes de collapsus pour la \tdy $\cP_{3,dyn1}$ donc aussi pour la \tfp 
correspondante. Or, via le \tcgz, cette condition de collapsus 
équivaut facilement au théorème \ref{th.nstformel}.
\begin{proof}{Démonstration du théorème \ref{th.colsim}.} Nous reprenons 
concernant la présentation $(G\vert R)$ d'une \sad de type 
$\cP_{3,dir}$ les notations du lemme \ref{lem.coll}. 
Supposons tout d'abord que nous avons le collapsus simultané de 
$\cP_{3,dir}$ et $\cP_{3,dyn1}$, alors nous en déduisons celui de  
$\cP_{3,sim1}$ qui est une théorie intermédiare entre ces deux 
extrêmes. Par ailleurs $\cP_{3,sim2}$ est plus faible que 
$\cP_{3,sim1}$. Il suffit donc de montrer qu'une égalité 
$u_1(u_2(u_3+j_3)+j_2)+j_1\egal0$
avec $u_i\in\Mu{i}$ et $j_i\in\Ij{i}$ pour $(i=1,2,3)$ fait collapser 
la \sad $S'$ de type $\cP_{3,sim2}$ et de présentation $(G\vert R)$. 
Tout d'abord, il est clair qu'on a les faits  $\U1(u_1)$, $\U2(u_2)$, 
$\U3(u_3)$, $\J1(j_1)$, $\J2(j_2)$, $\J3(j_3)$. Posons 
$v=u_2(u_3+j_3)+j_2$. Comme $u_1v=-j_1$ la règle de simplification 
$(\rP_{3,sim,1,1})$ nous permet de conclure $\J1(v)$. Donc on a 
$\J2(v)$ c.-à-d. $\J2(u_2(u_3+j_3)+j_2)$. Comme $\J2(-j_2)$ on en 
déduit  $\J2(u_2(u_3+j_3))$.  La règle de simplification 
$(\rP_{3,sim,1,2})$ nous permet de conclure $\J2(u_3+j_3)$ donc 
$\J3(u_3+j_3)$. On en déduit  $\J3(u_3)$, comme on a aussi 
$\U3(u_3)$, la régle de simplification $(\rP_{3,sim,1,3})$ donne 
$\J3(1)$ et cela collapse.

\sni Montrons maintenant le  collapsus simultané de $\cP_{3,dir}$ et 
$\cP_{3,dyn1}$. Pour cela il nous suffit de voir qu'un usage une fois 
de l'un des trois axiomes supplémentaires $(\rP_{3,dyn,1,i})$ ne 
change pas les conditions du collapsus. 
Cela signifie qu'il nous faut prouver le lemme suivant~:
\begin{lemma} 
\label{lem.colsim}
Soit $S=(\cP_{3,dir},(G\vert R))$ une \sad (notations  du lemme 
\ref{lem.coll}). Soit $t$ un terme de $\Zg$. Notons $R\cup \{ \J1(t)\}$ 
le système de relations obtenu en rajoutant $\J1(t)$, c.-à-d. en 
rempla\c{c}ant $\Rj1$ par $\Rj1\cup \{t \}$. Notons en outre $S\cup 
(\emptyset\vert \{ \J1(t)\})$ la \sad $(\cP_{3,dir},(G\vert R\cup \{ 
\J1(t)\}))$ 
\begin{itemize}
\item [---] Si les structures $S\cup (\emptyset \vert \{ \J1(t)\})$ et 
$S\cup (\emptyset \vert \{ \U1(t)\})$ collapsent  alors la strucure $S$ 
collapse.
\item [---] Si les structures $S\cup (\emptyset \vert \{ \J2(t)\})$ et 
$S\cup (\emptyset \vert \{ \U2(t)\})$ collapsent  alors la strucure $S$ 
collapse.
\item [---] Si les structures $S\cup (\emptyset \vert \{ \J3(t)\})$ et 
$S\cup (\emptyset \vert \{ \U3(t)\})$ collapsent  alors la strucure $S$ 
collapse.
\end{itemize}
\end{lemma}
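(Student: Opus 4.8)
The plan is to reduce both hypotheses, via Lemma \ref{lem.coll}, to algebraic identities in $\Zg$ and then to eliminate the auxiliary term $t$ between them. Recall from Lemma \ref{lem.coll} that a structure of type $\cP_{3,dir}$ with presentation $(G\vert R)$ collapses exactly when there is an identity $u_1(u_2(u_3+j_3)+j_2)+j_1+j_0=0$ with $u_k\in\Mu{k}$, $j_k\in\Ij{k}$, $j_0\in\Izero$; call such an expression a \emph{collapse term}. I will use freely, as in the proof of that lemma, the nested ideals $\cI_0=\Izero\subset\cI_1=\Izero+\Ij1\subset\cI_2=\cI_1+\Ij2\subset\cI_3=\cI_2+\Ij3$, together with the fact — visible in that same computation — that a product of collapse terms is a collapse term whose monoid factors are the products of the corresponding monoid factors. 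I treat the three bullets simultaneously, writing $d=i-1\in\{0,1,2\}$.

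First I would record what the two collapse hypotheses yield. Adding the relation $\J{i}(t)$ merely enlarges $\Rj{i}$ by $t$, i.e. replaces $\Ij{i}$ by $\Ij{i}+t\Zg$ and touches nothing else; splitting the new element of that ideal as an old one plus a $\Zg$-multiple of $t$, a collapse certificate for $S\cup(\emptyset\vert\{\J{i}(t)\})$ takes the shape $D+ct=0$, where $D$ is a collapse term for $S$ and $c\in\Zg$ is divisible by the product $u_1\cdots u_d$ of the first $d$ monoid factors occurring in $D$ (an empty condition when $i=1$). Likewise, adding $\U{i}(t)$ replaces $\Mu{i}$ by $\{t^m u:u\in\Mu{i},\,m\in\NN\}$, so a collapse certificate for $S\cup(\emptyset\vert\{\U{i}(t)\})$ takes the shape $t^m F+\sigma=0$, where $F$ is a collapse term for $S$ with monoid factors $v_1,v_2,v_3$ and $\sigma=v_1\cdots v_{i-1}\,j+\beta$ for some $j\in\Ij{i}$ and $\beta\in\cI_{i-1}$ (for $i=1$ this reads simply $\sigma\in\cI_1$).

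Then I would eliminate $t$: from $ct=-D$, raising to the $m$-th power gives $c^m t^m=(-1)^m D^m$, and multiplying the second identity by $c^m$ produces $(-1)^m D^m F+c^m\sigma=0$. Now $D^m F$ is a collapse term whose $k$-th monoid factor is $w_k:=u_k^{\,m}v_k$; write $D^m F=w_1(w_2(w_3+\nu_3)+\nu_2)+\nu_1$ with $\nu_k\in\cI_k$. The key step — the one I expect to demand the most care — is to match $c^m\sigma$ to this layered shape: since $c^m$ is divisible by $(u_1\cdots u_d)^m$, a term-by-term inspection shows $c^m\sigma=w_1w_2\,\gamma_3+w_1\,\gamma_2+\gamma_1$ with $\gamma_k\in\cI_k$, the monoid prefactors $1,w_1,w_1w_2$ being precisely the ones multiplying $\nu_1,\nu_2,\nu_3$ inside $D^m F$. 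This works exactly because the coefficient $c$ of $t$ in the $\J{i}$-certificate carries the outer monoid factors $u_1\cdots u_{i-1}$, at the very depth where $\sigma$ lives — which is why the three cases differ only in the value of $d$. Substituting and collecting ideal parts level by level then rewrites $(-1)^m D^m F+c^m\sigma=0$ (adding the $\gamma_k$ to the $\nu_k$ when $m$ is even, subtracting when $m$ is odd, the ideals $\cI_k$ being stable under both) as $w_1(w_2(w_3+\tilde\nu_3)+\tilde\nu_2)+\tilde\nu_1=0$ with $\tilde\nu_k\in\cI_k$, i.e. a collapse term for $S$; so $S$ collapses by Lemma \ref{lem.coll}. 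The degenerate cases $m=0$ or $c=0$ need no combination, since then one of the two hypotheses already exhibits a collapse term for $S$.
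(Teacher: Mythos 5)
Your argument is correct and is essentially the paper's own proof: the same Rabinowitsch-style elimination of $t$ (from $D+ct=0$ deduce $c^mt^m=(-1)^mD^m$ and substitute into the $\U{i}$-certificate multiplied by $c^m$), driven by the same key observation that the coefficient of $t$ in the $\J{i}$-certificate is divisible by the outer monoid factors $u_1\cdots u_{i-1}$, which is exactly how the paper re-nests the resulting identity in the cases $i=2,3$. One bookkeeping point deserves tightening: for $i=3$ the shape you record for $\sigma$ (merely $\beta\in\cI_2$) is too coarse to yield $c^m\sigma=w_1w_2\gamma_3+w_1\gamma_2+\gamma_1$ — you need the full layered form $\sigma=v_1v_2k_3+v_1k_2+\kappa_1$ with $k_2$ at level two and $\kappa_1\in\cI_1$, so that the level-two part of $c^m\sigma$ acquires the prefactor $w_1=u_1^{\,m}v_1$; since the actual certificate visibly has this form, your \gui{term-by-term inspection} does go through, and this is an imprecision of notation rather than a gap.
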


\begin{proof}{Démonstration du lemme \ref{lem.colsim}.}~\\
--- La structure $S\cup (\emptyset\vert \{ \J1(t)\})$ collapse \ssi on 
a une égalité dans $\Zg$ 
$$u_1(u_2(u_3+j_3)+j_2)+j_1+at+j_0\egal0 \eqno (1)$$
avec $a\in\Zg$, $u_i\in\Mu{i}$, $j_0\in\Izero$ et $j_i\in\Ij{i}$ pour 
$(i=1,2,3)$.
La structure $S\cup (\emptyset\vert \{ \U1(t)\})$ collapse \ssi on a 
une égalité dans $\Zg$ 
$$t^mv_1(v_2(v_3+k_3)+k_2)+k_1+k_0\egal0 \eqno (2)$$
avec $m\in \NN$, $v_i\in\Mu{i}$, $k_0\in\Izero$ et $k_i\in\Ij{i}$ pour 
$(i=1,2,3)$.
On peut terminer par l'usuel truc de Rabinovitch. Dans $(1)$ on met 
$-at$ dans le second membre, on élève à la puissance $m$ on 
obtient une égalité dans $\Zg$ qui se réécrit sous la forme
$$u'_1(u'_2(u'_3+j'_3)+j'_2)+j'_1+j'_0\egal bt^m \eqno (3)$$
avec $b\in\Zg$, $u'_i\in\Mu{i}$, $j'_0\in\Izero$ et $j'_i\in\Ij{i}$ 
pour $(i=1,2,3)$. Il reste à multiplier $(2)$ par $b$ puis à 
remplacer $bt^m$ par sa valeur donnée en $(3)$. On obtient
$$(u'_1(u'_2(u'_3+j'_3)+j'_2)+j'_1+j'_0)v_1(v_2(v_3+k_3)+k_2) +bk_1+bk_0
\egal0 \eqno (4)$$
qui se réécrit sous la forme voulue.

\sni --- On reprend le même style de notations, sans forcément 
donner toutes les précisions. La structure $S\cup (\emptyset\vert \{ 
\J2(t)\})$ collapse \ssi on a une égalité dans $\Zg$ 
$$u_1(u_2(u_3+j_3)+j_2+at)+j_1+j_0\egal0 \eqno (5)$$
La structure $S\cup (\emptyset\vert \{ \U2(t)\})$ collapse \ssi on a 
une égalité dans $\Zg$ 
$$v_1(t^mv_2(v_3+k_3)+k_2)+k_1+k_0\egal0 \eqno (6)$$
On réécrit $(5)$ sous la forme $(7)$
$$u_1(u_2(u_3+j_3)+j_2)+j_1+j_0\egal -u_1at \eqno (7)$$
En élevant ceci à la puissance $m$ on obtient
$$u'_1(u'_2(u'_3+j'_3)+j'_2)+j'_1+j'_0\egal u'_1bt^m \eqno (8)$$
On multiplie $(6)$ par $u'_1b$ on obtient
$$v_1(u'_1bt^mv_2(v_3+k_3)+u'_1k'_2)+k'_1+k'_0 \egal 0 \eqno (9)$$
En utilisant $(8)$ cela donne
$$v_1((u'_1(u'_2(u'_3+j'_3)+j'_2)+j'_1)v_2(v_3+k'_3)+u'_1k'_2)+ 
k'_1+k'_0\egal0 \eqno (10)$$
ce qui se réécrit
$$v_1u'_1((u'_2(u'_3+j'_3)+j'_2)v_2(v_3+k'_3)+k'_2)+j''_1+j''_0\egal 0 
\eqno (11)$$
enfin $(u'_2(u'_3+j'_3)+j'_2)v_2(v_3+k'_3)+k'_2$ se réécrit
$u_2''(u_3''+j''_3)+j''_2.$

\sni --- Le troisième cas est essentiellement le même que le 
second.
\end{proof}

\sni Montrons enfin que $\cP_{3,dyn1}$ et $\cP_{3,sim1}$ prouvent les 
mêmes faits. Comme $\cP_{3,sim1}$  est plus faible que $\cP_{3,dyn1}$ 
et comme les deux théories collapsent simultanément, il nous suffit 
de vérifier le lemme suivant.
\begin{lemma} 
\label{lem.faitcol} Soit $S$ une \sad de type $\cP_{3,sim1}$ et $t$ un 
élément de $S$ (plus précisément $t$ est un élément de 
$\Zg$ si $G$ est l'ensemble générateur de $S$). Alors
\begin{itemize}
\item [---] un fait $\J{i}(t)$ est vrai dans $S$ \ssi rajouter 
$\U{i}(t)$ aux relations de $S$ produit un collapsus,
\item [---] un fait $\U{i}(t)$ est vrai dans $S$ \ssi rajouter 
$\J{i}(t)$ aux relations de $S$ produit un collapsus.
\end{itemize}
\end{lemma}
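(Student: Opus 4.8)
The two ``only if'' statements are immediate from Lemma~\ref{lem.col0}: if $\J{i}(t)$ is a provable fact of $S$, then in the structure obtained by adjoining the relation $\U{i}(t)$ one proves both $\U{i}(t)$ and $\J{i}(t)$ for the same term, which collapses; symmetrically for $\U{i}(t)$ with $\J{i}(t)$ adjoined. So the content lies in the converses, and for these the plan is to first translate the collapse hypothesis into an algebraic identity. Adjoining $\U{i}(t)$ to $S$ merely replaces $\Ru{i}$ by $\Ru{i}\cup\{t\}$, hence $\Mu{i}$ by $\{\,t^m u : m\in\NN,\ u\in\Mu{i}\,\}$; by Theorem~\ref{th.colsim} the enriched structure collapses exactly when the associated structure of type $\cP_{3,dir}$ does, and Lemma~\ref{lem.coll} turns this into an identity in $\Zg$ which, for $i=1$, reads
$$t^m u_1(u_2(u_3+j_3)+j_2)+j_1+j_0=0$$
with $u_k\in\Mu{k}$, $j_k\in\Ij{k}$, $j_0\in\Izero$, $m\in\NN$, and for $i=2$ (resp.\ $i=3$) reads the same with $u_2$ (resp.\ $u_3$) replaced by $t^m u_2$ (resp.\ $t^m u_3$). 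Similarly, adjoining $\J{i}(t)$ replaces $\Ij{i}$ by $\Ij{i}+\Zg\,t$, so the collapse identity takes the same three shapes but with $j_i$ replaced by $j_i'+a\,t$ ($j_i'\in\Ij{i}$, $a\in\Zg$) and with no power of $t$ inserted.

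For the ``adjoining $\U{i}(t)$ forces $\J{i}(t)$ to be provable'' halves, I would read a derivation of $\J{i}(t)$ off the identity above. In $S$ the facts $\U{k}(u_k)$ (from $(\rP_{3,dir,5,k})$ and $(\rP_{3,dir,6,k})$), $\J{k}(j_k)$ and $\J{k}(j_0)$ (from $\Ac_1$, $\Ac_2$, $\Ac_3$, $(\rP_{3,dir,1,k})$, $(\rP_{3,dir,2,k})$, $(\rP_{3,dir,3,k})$) are provable, and from $\U3(u_3)$, $\J3(j_3)$ one gets $\U3(u_3+j_3)$ by $(\rP_{3,dir,8,3})$, hence $\U2(u_3+j_3)$ and $\U1(u_3+j_3)$ by $(\rP_{3,dir,9,i})$; an analogous $(\rP_{3,dir,6,i})$/$(\rP_{3,dir,8,i})$ chain shows that $u_2(u_3+j_3)+j_2$ is $\U2$, hence $\U1$. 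Rewriting the identity as (nested product)$\,=-(j_1+j_0)$, whose right side is $\J1$, one peels it layer by layer: cancel the outer factor $u_1$ by $(\rP_{3,sim,1,1})$, then alternately remove a summand $j_k$ (by $\J{k}(-j_k)$ and $(\rP_{3,dir,2,k})$) and a factor --- $u_k$, or the current inner sub-product --- by $(\rP_{3,sim,1,k})$ using the $\U{k}$-facts just listed, promoting $\J{k}$ to $\J{k+1}$ by $(\rP_{3,dir,4,k})$ whenever a level is crossed, until only $\J{i}(t^m)$ remains. If $m=0$ this is $\J{i}(1)$, which collapses $S$ and makes every fact provable; if $m\geq1$, multiply up to $\J{i}(t^{2^k})$ with $2^k\geq m$ using $(\rP_{3,dir,3,i})$, then apply $(\rP_{3,sim,3,i})$ $k$ times to obtain $\J{i}(t)$.

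For the ``adjoining $\J{i}(t)$ forces $\U{i}(t)$ to be provable'' halves, the identity is, for $i=1$, $\;u_1(u_2(u_3+j_3)+j_2)+(j_1'+j_0)=-a\,t$. As above one shows $v:=u_2(u_3+j_3)+j_2$ is $\U2$ hence $\U1$, then $\U1(u_1v)$ by $(\rP_{3,dir,6,1})$ and $\U1(u_1v+(j_1'+j_0))$ by $(\rP_{3,dir,8,1})$ using $\J1(j_1'+j_0)$; thus $\U1(-a\,t)$, and $(\rP_{3,sim,2,1})$ yields $\U1(t)$. For $i=2,3$ one peels only the outer factors (and, for $i=3$, the summand $j_2$) as in the previous paragraph, reaching $\J{i}(y+a\,t)$ where $y$ is the relevant sub-term ($u_2(u_3+j_3)+j_2'$ for $i=2$, $u_3+j_3'$ for $i=3$), which one checks is $\U{i}$ by the $(\rP_{3,dir,8,i})$/$(\rP_{3,dir,9,i})$ chain; then from $\J{i}(-(y+a\,t))$ and $\U{i}(y)$ one gets $\U{i}(-a\,t)$ by $(\rP_{3,dir,8,i})$, and $(\rP_{3,sim,2,i})$ gives $\U{i}(t)$.

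The bookkeeping of which axiom removes which factor or summand at each level is routine; the delicate points I anticipate are the passage $\J{i}(t^m)\vdash\J{i}(t)$ (hence the padding-to-a-power-of-two step and the separate treatment of $m=0$, where $S$ itself collapses) and the check that every rule invoked --- in particular $(\rP_{3,dir,4,i})$, $(\rP_{3,dir,8,i})$, $(\rP_{3,dir,9,i})$ and the three simplification rules --- actually belongs to $\cP_{3,sim1}=(\cL_{\cP_3}\vert\Ax_0,\Ax_1,\Ax_2,\Ax_3,\Ax_4)$, so that the derivation stays inside that theory. With Lemma~\ref{lem.faitcol} proved, the stated conclusion follows: in $\cP_{3,dyn1}$ a fact $\J{i}(t)$ (resp.\ $\U{i}(t)$) is provable for $S$ exactly when adjoining $\U{i}(t)$ (resp.\ $\J{i}(t)$) collapses --- the nontrivial direction using $(\rP_{3,dyn,1,i})$ with $x=y=t$ to split and kill the dead branch --- and by Theorem~\ref{th.colsim} this enriched structure collapses in $\cP_{3,dyn1}$ iff it collapses in $\cP_{3,sim1}$ iff, by Lemma~\ref{lem.faitcol}, the same fact is provable in $\cP_{3,sim1}$; since the facts $x=0$ agree trivially, $\cP_{3,dyn1}$ and $\cP_{3,sim1}$ prove the same facts.
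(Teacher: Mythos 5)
Your proposal is correct and follows essentially the paper's own argument: translate the collapse of the enriched structure into an identity in $\Zg$ via Lemma~\ref{lem.coll} (legitimated by the simultaneous-collapse part of Theorem~\ref{th.colsim}), then derive the fact inside $\cP_{3,sim1}$ by peeling the identity with $(\rP_{3,sim,1,i})$, $(\rP_{3,dir,8,i})$, $(\rP_{3,sim,2,i})$, and the $t^m\to t^{2^k}\to t$ trick with $(\rP_{3,sim,3,i})$. The paper only writes out the two $i=2$ cases and leaves the rest analogous, so your more systematic case bookkeeping (including the harmless $m=0$ remark) is just a fuller rendering of the same proof.
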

Naturellement ce résultat vaut aussi pour $\cP_{3,dyn1}$.
\begin{proof}{Démonstration du lemme \ref{lem.faitcol}.}
Supposons par exemple que la structure $S\cup (\emptyset\vert \{ 
\J2(t)\})$ collapse~: on a une égalité dans $\Zg$ 
$$u_1(u_2(u_3+j_3)+j_2+at)+j_1+j_0\egal0 \eqno (5)$$
On~a donc $\J1(u_1(u_2(u_3+j_3)+j_2+at))$ et en appliquant 
$(\rP_{3,sim,1,1})$ on obtient  $\J1(u_2(u_3+j_3)+j_2+at)$ donc 
$\J2(u_2(u_3+j_3)+j_2+at)$ donc $\J2(u_2(u_3+j_3)+at)$. Posons 
$j=u_2(u_3+j_3)+at$.   
Comme on a $-at=-j+(u_2(u_3+j_3))$, $\J2(-j)$ et $\U2(u_2(u_3+j_3))$, on 
obtient $\U2(-at)$. Et finalement $\U2(t)$ en appliquant 
$(\rP_{3,sim,2,2})$.\\
Supposons maintenant par exemple que la structure $S\cup 
(\emptyset\vert \{ \U2(t)\})$ collapse~: on a une égalité dans 
$\Zg$ 
$$v_1(t^mv_2(v_3+k_3)+k_2)+k_1+k_0\egal0 \eqno (6)$$
Comme précédemment on obtient $\J2(t^mv_2(v_3+k_3)+k_2)$, puis par 
$(\rP_{3,sim,1,2})$, $\J2(t^m)$, donc aussi $\J2(t^{2^\ell})$ pour un 
certain $\ell$. En appliquant $\ell$ fois $(\rP_{3,sim,3,2})$ il vient 
$\J2(t)$.
\end{proof}
\end{proof}

Revenons maintenant au \nst formel (théorème \ref{th.nstformel}), 
pour la démonstration duquel on a besoin de l'axiome du choix{\footnote{~Le 
\tcg est, du point de vue classique, équivalent à une forme faible 
de l'axiome du choix. Dans le cas d'une théorie dynamique avec un 
langage dénombrable et dont les axiomes peuvent être 
énumérés, le \tcg  est, du point de vue constructif,  
équivalent au principe d'omniscience {\bf  LLPO} qui affirme que tout 
nombre réel est $\leq 0$ ou $\geq 0$. Ceci dans la mesure où on 
considère comme non problématique l'axiome du choix 
dénombrable.}}. Il existe des cas où ce théorème est vrai d'un 
point de vue constructif.  

Pour la définition d'un anneau de Lasker-Noether nous renvoyons à
\cite{MRR}. En \clama tout anneau noethérien est un anneau de
Lasker-Noether. 
\begin{theorem} 
\label{th.nstformel2} {\em (\nst formel pour les chaînes 
d'\idepsz, en \maths constructives).} Soit $\gA$ un anneau de 
Lasker-Noether. Soient $J_1$, $J_2$, $J_3$,  $U_1$, $U_2$, $U_3$  six 
parties finies de $\gA$. Les deux propriétés suivantes sont 
équivalentes~:
\begin{itemize}
\item [$(a)$] il n'existe pas 3 \ideps $P_1\subset P_2\subset P_3$  de 
type fini   tels que $J_1\subset P_1$, $U_1\cap P_1=\emptyset $, 
$J_2\subset P_2$, 
$U_2\cap P_2=\emptyset $, $J_3\subset P_3$, $U_3\cap P_3=\emptyset$
\item [$(b)$] il existe $j_1$ dans l'idéal engendré par $J_1$, 
$j_2$ dans l'idéal engendré par  $J_2$, $j_3$ dans l'idéal 
engendré par  $J_3$, $u_1$ dans le monoïde engendré par $U_1$, 
$u_2$ dans le monoïde engendré par $U_2$, $u_3$ dans le 
monoïde engendré par $U_3$, vérifiant l'égalité  
$$u_1(u_2(u_3+j_3)+j_2)+j_1=0\eqno (*)$$ 
\end{itemize}

\noi Plus précisément, il existe un algorithme qui construit ou 
bien une égalité du type $(*)$ en $(b)$ ou bien trois \ideps 
vérifiant les conditions requises en $(a)$. 
\end{theorem}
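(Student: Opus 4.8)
Le sens $(b)\Rightarrow(a)$ ne demande aucune hypothèse sur $\gA$ et se traite exactement comme pour le \tho \ref{th.nstformel}~: si $P_1\subset P_2\subset P_3$ étaient trois \ideps avec $\gen{J_i}\subset P_i$ et $U_i\cap P_i=\emptyset$, alors chaque $u_i$ (produit d'éléments de $U_i$) échapperait à $P_i$, car le complémentaire d'un idéal premier est multiplicativement stable, tandis que $j_i\in\gen{J_i}\subset P_i$. En lisant $(*)$ de l'intérieur vers l'extérieur et en utilisant à chaque étape la primalité de $P_i$, on obtiendrait successivement $u_2(u_3+j_3)+j_2\in P_1\subset P_2$, puis $u_2(u_3+j_3)\in P_2$, puis $u_3+j_3\in P_2\subset P_3$, puis $u_3\in P_3$, contredisant $u_3\notin P_3$. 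En particulier, les deux sorties possibles de l'algorithme décrit ci-dessous seront mutuellement exclusives.

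Pour la réciproque, le plan est de décrire directement l'algorithme annoncé, par récurrence sur $\ell$ (le \tho correspondant au cas $\ell=3$). L'outil de base, fourni par la théorie des anneaux de Lasker-Noether (\cite{MRR}), est le suivant~: d'une partie finie $J$ de $\gA$ on sait calculer la liste finie des \ideps minimaux au-dessus de l'idéal $\gen{J}$, tous de type fini, dont l'intersection vaut $\sqrt{\gen{J}}$~; on sait décider l'appartenance à un idéal de type fini, donc aussi décider si un élément appartient à $\sqrt{\gen{J}}$ et, le cas échéant, en calculer un exposant. Notons $\cM_i$ le monoïde engendré par $U_i$ et $w_i$ le produit des éléments de $U_i$ (en nombre fini). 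Pour $\ell=1$ l'algorithme est~: calculer les \ideps minimaux au-dessus de $\gen{J_1}$~; si l'un d'eux, $\mathfrak p$, ne contient pas $w_1$ (donc, étant premier, ne rencontre pas $\cM_1$), renvoyer $P_1=\mathfrak p$~; sinon tous contiennent $w_1$, donc $w_1\in\sqrt{\gen{J_1}}$, et une puissance $w_1^N\in\gen{J_1}$ fournit l'identité cherchée, avec $u_1=w_1^N$ et $j_1=-w_1^N$.

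Soit $\ell\geq2$. On calcule les \ideps minimaux au-dessus de $\gen{J_1}$ et l'on extrait ceux, $\mathfrak q_1,\ldots,\mathfrak q_s$, qui ne contiennent pas $w_1$. Pour chaque $t\in\{1,\ldots,s\}$ on applique récursivement l'algorithme au problème à $\ell-1$ étages de données $(\mathfrak q_t+\gen{J_2},J_3,\ldots,J_\ell\,;\,U_2,\ldots,U_\ell)$, où $J_2$ est remplacé par un système fini de générateurs de l'idéal (de type fini) $\mathfrak q_t+\gen{J_2}$. Si pour un $t$ cet appel renvoie une chaîne $P_2\subset\cdots\subset P_\ell$ (avec $\mathfrak q_t+\gen{J_2}\subset P_2$, $\gen{J_i}\subset P_i$ pour $i\geq3$ et $U_i\cap P_i=\emptyset$ pour $i\geq2$), alors $\mathfrak q_t\subset P_2\subset\cdots\subset P_\ell$ convient~: on pose $P_1=\mathfrak q_t$ et on la renvoie. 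Sinon, chacun des $s$ appels renvoie une identité à $\ell-1$ étages dans laquelle le terme additif le plus externe appartient à $\mathfrak q_t+\gen{J_2}$~; en lui retranchant sa composante dans $\mathfrak q_t$ on obtient un terme $X_t$ \emph{de la forme voulue à $\ell-1$ étages} relativement à $(J_2,\ldots,J_\ell\,;\,U_2,\ldots,U_\ell)$, avec $X_t\in\mathfrak q_t$. Comme $w_1$ appartient à chacun des \ideps minimaux au-dessus de $\gen{J_1}$ qui rencontrent $\cM_1$, et $X_t$ à $\mathfrak q_t$, le produit $w_1X_1\cdots X_s$ appartient à \emph{tous} les \ideps minimaux au-dessus de $\gen{J_1}$, donc à $\sqrt{\gen{J_1}}$ (avec la convention que le produit vide vaut $1$ si $s=0$). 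Une puissance $(w_1X_1\cdots X_s)^N=w_1^N(X_1\cdots X_s)^N$ est alors dans $\gen{J_1}$. Puisque $w_1^N\in\cM_1$, puisqu'un produit — donc une puissance — de termes de la forme voulue à $\ell-1$ étages est encore de cette forme (remarque de la démonstration du lemme \ref{lem.coll}, en notant que $1$ en est un cas particulier), et puisque $\gen{J_1}$ est un idéal, on obtient, en posant $u_1=w_1^N$, $Z=(X_1\cdots X_s)^N$ et $j_1=-u_1Z\in\gen{J_1}$, l'identité $u_1Z+j_1=0$ à $\ell$ étages. La récurrence est bien fondée~: $\ell$ décroît strictement et il n'y a qu'un nombre fini d'appels à chaque niveau.

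La correction de l'algorithme est alors directe — les chaînes renvoyées vérifient les conditions figurant en $(a)$, les identités renvoyées sont de la forme $(*)$ — et l'équivalence $(a)\Leftrightarrow(b)$ s'ensuit via le sens $(b)\Rightarrow(a)$ déjà établi (qui interdit que les deux se produisent à la fois). La difficulté principale ne sera pas la combinaison des identités, qui relève du même genre de manipulation algébrique que le \gui{truc de Rabinovitch} du lemme \ref{lem.colsim}, mais la mise à disposition effective de la \gui{boîte à outils Lasker-Noether}~: calcul des \ideps minimaux de type fini au-dessus d'un idéal de type fini, décidabilité de l'appartenance à un tel premier et au radical d'un idéal de type fini, avec exposant explicite. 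Il faudra aussi soigner, pour $\ell$ quelconque, la gestion des \gui{trois petits points} dans l'imbrication des identités, et justifier qu'on peut se contenter de chercher $P_1$ parmi les premiers minimaux au-dessus de $\gen{J_1}$~: si une chaîne existe avec un $P_1'$ quelconque, tout premier minimal au-dessus de $\gen{J_1}$ contenu dans $P_1'$ convient également et figure dans la liste calculée, de sorte que la récursion sur ce premier-là produit la chaîne, et non une identité (par le sens facile).
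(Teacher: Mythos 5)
Votre démonstration est correcte, et elle coïncide avec celle de l'article pour la partie \gui{décision de l'existence d'une chaîne} (premiers minimaux de type fini au-dessus d'idéaux de type fini dans un anneau de Lasker-Noether, tests d'appartenance, radical avec exposant explicite, même boîte à outils tirée de \cite{MRR}) ; en revanche, pour la production du certificat dans le cas négatif, vous empruntez une route réellement différente. L'article construit une \evd de la \sad $S$ dont toutes les branches meurent, puis convertit ce collapsus en une identité $(*)$ via le lemme \ref{lem.coll} et le théorème de collapsus simultané \ref{th.colsim} ; vous court-circuitez toute la machinerie dynamique en fabriquant directement l'identité par récurrence sur $\ell$ : chaque appel récursif qui échoue fournit un terme $X_t\in\mathfrak q_t$ \gui{de la forme voulue} à $\ell-1$ étages, le produit $w_1X_1\cdots X_s$ tombe dans $\sqrt{\gen{J_1}}$, et une puissance explicite donne $u_1Z+j_1=0$ grâce à la stabilité par produit des formes emboîtées. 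Cette combinatoire (produit d'éléments pris dans les premiers minimaux \gui{bloqués}, chute dans le radical, exposant explicite) est exactement celle de l'argument de mort des branches de l'article ; ce que vous gagnez, c'est un traitement d'emblée pour $\ell$ quelconque, un algorithme unique qui entrelace recherche de chaîne et construction du certificat, et une preuve autonome qui n'utilise ni \ref{lem.coll} ni \ref{th.colsim} ; ce que l'article gagne, c'est de réutiliser le travail algébrique déjà fait (la manipulation de type Rabinovitch est dans le lemme \ref{lem.colsim}) et d'illustrer la méthode dynamique qui est l'objet de l'article. Deux points à soigner, mais qui ne créent pas de lacune : la stabilité par produit des formes emboîtées que vous invoquez est énoncée dans le lemme \ref{lem.coll} pour les idéaux cumulés $\cI_i$ de $\Zg$, il faut donc la revérifier (calcul immédiat, les termes croisés étant absorbés par l'idéal le plus externe) pour les formes à idéaux $\gen{J_i}$ non cumulés dans $\gA$, avec $1$ comme forme triviale pour le cas $s=0$ ; et votre filtre de niveau $1$ par $w_1$ seul (au lieu de $U_1\cup U_2\cup U_3$ comme dans l'article) est correct — les candidats superflus sont éliminés plus bas en renvoyant des identités, et la validité d'une chaîne renvoyée ne dépend que de $U_1\cap\mathfrak q_t=\emptyset$ — mais mérite bien la phrase de justification que vous donnez en fin de preuve, laquelle, jointe au sens facile $(b)\Rightarrow(a)$, suffit effectivement à conclure.
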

\begin{proof}{Démonstration.} Notons $I_1=(J_1)$, $I_2=(J_1)+(J_2)$ et 
$I_3=(J_1)+(J_2)+(J_3)$. Notons $V_3$  la partie multiplicative 
engendrée par $U_3$, $V_2$ la partie multiplicative engendrée par 
$U_3$ et $U_2$, $V_1$ la partie multiplicative engendrée par $U_3$, 
$U_2$ et $U_1$.\\ 
Dans l'anneau $\gA$ il est possible de construire explicitement la 
famille finie des \ideps minimaux au dessus d'un idéal de type fini 
donné. En outre si $P$ est un idéal premier de type fini explicite 
et $U$ une partie multiplicative de type fini explicite, il est 
possible de tester si $P\cap U=\emptyset $. En effet, cela équivaut 
à $u\notin P$ pour chacun des générateurs de $U$. \\
Si trois \ideps $P_1$, $P_2$, $P_3$ vérifiant les conditions requises 
existent, alors on peut tout d'abord remplacer $P_1$ par un idéal 
premier $Q_1$ minimal au dessus de $I_1$ et contenant~$P_1$. Ensuite on 
peut remplacer $P_2$ par un idéal premier $Q_2$ minimal au dessus de 
$I_2+Q_1$ et contenant~$P_2$. Enfin,  on peut remplacer $P_3$ par un 
idéal premier $Q_3$ minimal au dessus de $I_3+Q_2$ et contenant~$P_3$.\\
Partant de $I_1$, $I_2$, $I_3$ il n'y a qu'un nombre fini de 
possibilités, qu'on peut toutes expliciter, pour trois idéaux 
$Q_1$, $Q_2$, $Q_3$ vérifiant~: $Q_1$ est minimal au dessus de $I_1$, 
$Q_2$ est minimal au dessus de $I_2+Q_1$ et  $Q_3$ est minimal au 
dessus de $I_3+Q_2$. Pour chaque triplet $(Q_1,Q_2,Q_3)$ ainsi 
sélectionné, il reste à tester si $Q_1\cap V_1=Q_2\cap 
V_2=Q_3\cap V_3=\emptyset$.\\
Nous venons de voir que l'existence éventuelle de trois \ideps 
vérifiant les conditions requises peut être testée explicitement, 
avec construction d'un triplet convenable en cas de réponse 
positive.\\
Il nous reste à voir qu'en cas de réponse négative nous avons un 
collapsus de la \sad correspondante, ce qui d'après le lemme 
\ref{lem.coll} et le théorème de collapsus simultané 
(théorème \ref{th.colsim}), donnera une égalité du type $(*)$. 
Il s'agit de la \sad $S=(\cP_{3,dyn1},(G\vert R))$ avec $G=\gA$, 
$R=(\Rzero,\Rj1,\ldots,\Ru3)=(\Rzero,J_1,J_2,J_3,U_1,U_2,U_3)$ où~$\Rzero$ est le {\em  diagramme de l'anneau $\gA$}{\footnote{$\Rzero$ 
contient les éléments suivants de $\Zg$~: $1_\Zg-1_\gA$, $0_\Zg-0_\gA$ 
ainsi que tous les $a+_\Zg b-_\Zg c$ et les $a\times_\Zg b-_\Zg d$ 
lorsque $a+_\gA b=_\gA c$ et $a\times_\gA  b=_\gA d$.}}.\\    
Considérons les \ideps minimaux $Q_{1,\ell}$ ($\ell=1,\ldots,n_1$) au 
dessus de $I_1$, que nous avons de manière explicite. Nous savons que 
le radical de $I_1$ est l'intersection de ces \idepsz. Supposons tout 
d'abord que la construction soit bloquée ici par le fait que $U_1\cup 
U_2\cup U_3$ coupe chacun des  $Q_{1,\ell}$. En faisant le produit des 
éléments en question, on obtient de manière explicite un 
élément $v$ de $V_1\cap\sqrt{I_1}$. Puisque les démonstrations dans 
\cite{MRR} sont constructives, nous connaissons un exposant $m$ tel que 
$v^m$ est dans $I_1$ de manière explicite. Ceci produit un collapsus 
de la structure $S$.\\
Supposons maintenant par exemple que $U_1\cup U_2\cup U_3$ (donc aussi 
$V_1$) coupe tous les $Q_{1,\ell}$  sauf~$Q_{1,1}$,~$Q_{1,2}$ et~$Q_{1,3}$.  
Nous allons voir que nous pouvons construire une \evd de $S$ dans 
laquelle toutes les branches meurent sauf certaines réparties en 
trois  types. Dans les branches du premier type sont vrais les 
$\J1(a_i)$ pour un nombre fini de $a_i$ qui engendrent  $Q_{1,1}$.   
Dans les branches du second type sont vrais les $\J1(b_j)$ pour un 
nombre fini de~$b_j$ qui engendrent  $Q_{1,2}$.   Dans les branches du  
troisième type sont vrais les $\J1(c_k)$ pour pour un nombre fini de 
$c_k$ qui engendrent  $Q_{1,3}$.\\
Pour ce faire nous procédons comme suit~: nous choisissons pour 
chaque $\ell\neq 1,2,3$ un élément dans $Q_{1,\ell}\cap (U_1\cup 
U_2\cup U_3)$, nous appelons $u$ leur produit. On~a évidemment 
$\U1(u)$. Si maintenant nous choisissons un élément $a$ de 
$Q_{1,1}$, un élément  $b$ de $Q_{1,2}$ et  un élément $c$ de 
$Q_{1,3}$. On~a $uabc$ dans le radical de $I_1$. D'où une égalité 
explicite entre un élément de $I_1$ et $(uabc)^m$ pour un certain 
$m$. On~a donc $\J1((uabc)^m)$ et $\U1(u)$ d'où on tire $\J1(abc)$. 
\\
Après avoir sélectionné $n_1$ éléments $a_i$ qui engendrent  
$Q_{1,1}$, $n_2$ éléments $b_j$ qui engendrent~$Q_{1,2}$ et~$n_3$ 
éléments  $c_k$ qui engendrent  $Q_{1,3}$, on applique 
systématiquement les axiomes \hbox{$\J1(x) \lor \U1(x)$}~à ces 
éléments, ce qui donne $2^{n_1+n_2+n_3}$ branches. Toute branche 
où l'on~a pour un triplet $(a_i,b_j,c_k)$, $\U1(a_i)$, $\U1(b_j)$ et 
$\U1(c_k)$ meurt puisqu'on~a aussi $\J1(a_ib_jc_k)$. Toute branche qui 
reste en vie est donc de l'un des trois types annoncé.\\
Dans les branches du premier type, nous considérons la liste finie 
des \ideps minimaux au dessus de $I_2+Q_{1,1}$. Supposons par exemple 
que chacun de ces \ideps coupe $U_2\cup U_3$ (donc aussi $V_2$). En 
considérant un produit des éléments en question, on tombe 
explicitement dans le radical de $I_2+Q_{1,1}$ et cela fait mourir la 
branche. etc... \\
En définitive l'obstruction à la construction des \ideps 
vérifiant les conditions requises produit bien un collapsus de la 
\sad $S$.
\end{proof}

\section{Un Null\-stellen\-satz lié à la théorie de la dimension} 
\label{sec icocl}
\begin{theorem} 
\label{th.nst} Soit $\gK$ un corps contenu dans un corps 
algébriquement clos discret $\gL$. Soient $J_1,J_2,J_3,U_1,U_2,U_3$ 
six familles finies de \pols de $\Kxn=\Kx$. \\
Soient $\cI_i$ l'idéal de $\Kx$ engendré par $J_i$ et $\cM_i$ le 
monoïde engendré par $U_i$, $(i=1,2,3)$.\\
On~a alors l'équivalence suivante
\begin{itemize}
\item [$(a)$] Il n'existe pas trois variétés irréductibles 
$V_3\subset V_2\subset V_1\subset \gL^n$ qui vérifient les conditions 
suivantes
$$\forall f\in J_i\;\;  f|_{V_i}=0|_{V_i},\; \; \; \; \forall g\in 
U_i\; \;  g|_{V_i}\neq 0|_{V_i}\; \;\; \;  (i=1,2,3)
$$
\item [$(b)$] Il existe des $u_i\in \cM_i$ et  $j_i\in \cI_i$ 
$(i=1,2,3)$ tels que
$$ u_1(u_2(u_3+j_3)+j_2)+j_1= 0
$$
\end{itemize}
En outre, on a un procédé explicite qui construit, à partir de la 
donnée  $J_1,\ldots,J_3,U_1,\ldots,U_3$, ou bien une identité 
algébrique comme en $(b)$ (si une telle identité existe), ou bien 
(dans l'autre cas) un système fini de générateurs pour chacun des 
\ideps correspondant aux variétés irréductibles vérifiant les 
conditions décrites au point $(a)$.
\end{theorem}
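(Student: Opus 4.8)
The plan is to reduce Theorem \ref{th.nst} to the formal Nullstellensatz (Theorem \ref{th.nstformel}) together with the simultaneous collapse theorem (Theorem \ref{th.colsim}) and the basic collapse lemma (Lemma \ref{lem.coll}), exactly as in the Lasker--Noether case (Theorem \ref{th.nstformel2}), the new point being that we work in the ring $\Lx$ where increasing chains of prime ideals become fully explicit because $\gL$ is a discrete algebraically closed field. First I would pass from $\gK$ to $\gL$: by flatness/faithful flatness of the extension $\Kx \to \Lx$ (or simply by a direct degree argument on a finite system of generators), an identity of type $(b)$ with $u_i\in\cM_i$, $j_i\in\cI_i$ over $\gL$ can be pushed back to one over $\gK$, so it suffices to prove the equivalence with $\gK$ replaced by $\gL$. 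Likewise the irreducible subvarieties $V_i\subset\gL^n$ of $(a)$ correspond bijectively to the prime ideals $P_i = \cI(V_i)$ of $\Lx$, with $f|_{V_i}=0$ meaning $f\in P_i$ and $g|_{V_i}\neq 0$ meaning $g\notin P_i$; the strict or non-strict inclusions match up because $V_i\subset V_j \iff P_j\subset P_i$. Thus $(a)$ is literally the statement that there is no chain of three primes $P_3\supset\ldots$, i.e. $P_1\subset P_2\subset P_3$ in $\Lx$ with $\cI_i^{\gL}\subset P_i$ and $\cM_i^{\gL}\cap P_i=\emptyset$, which is precisely clause $(a)$ of Theorem \ref{th.nstformel} applied to $\gA=\Lx$.

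The implication $(b)\Rightarrow(a)$ is the trivial direction: an identity $u_1(u_2(u_3+j_3)+j_2)+j_1=0$ with $u_i$ in a monoid avoiding $P_i$ and $j_i$ in $\cI_i\subset P_i$ forces, working modulo $P_3$ then $P_2$ then $P_1$ and using that each $P_i$ is prime, a contradiction — this is the elementary calculation already recorded in the proof of Lemma \ref{lem.coll} and Theorem \ref{th.colsim}. For $(a)\Rightarrow(b)$, I would argue contrapositively: if no identity of type $(b)$ exists, then by Lemma \ref{lem.coll} the dynamical algebraic structure $S=(\cP_{3,dyn1},(G\mid R))$ with $G=\Lx$, $R$ recording the diagram of $\Lx$ together with $\Rj{i}=J_i$ and $\Ru{i}=U_i$, does not collapse; one then needs to produce an actual chain of three primes $P_1\subset P_2\subset P_3$ in $\Lx$ realizing the constraints. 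This is where the explicit content enters and where the argument departs from merely invoking Gödel completeness: I would mimic, step by step, the constructive argument in the proof of Theorem \ref{th.nstformel2}, using that $\Lx$ with $\gL$ discrete algebraically closed is a Lasker--Noether ring in which one can \emph{explicitly} compute the finitely many minimal primes over a finitely generated ideal, test whether a finitely generated prime meets a finitely generated multiplicative monoid, and find a witnessing exponent whenever an element lies in a radical. Concretely: compute the minimal primes $Q_{1,\ell}$ over $\cI_1$; if $V_1 = \cM_1\cup\cM_2\cup\cM_3$ meets every $Q_{1,\ell}$, the product of the witnesses lies in $\sqrt{\cI_1}$, a known power lands in $\cI_1$, and the structure collapses — contradiction; otherwise pick a surviving $Q_{1,\ell_0}$, set $P_1=Q_{1,\ell_0}$, and recurse with $\cI_2+P_1$ to get $P_2$, then with $\cI_3+P_2$ to get $P_3$, at each stage either finding the next prime or producing a collapse that contradicts our hypothesis.

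The main obstacle — and the only place where real work beyond bookkeeping is needed — is establishing that $\Lx$ genuinely supports these explicit prime-ideal computations, i.e. that the hypotheses under which Theorem \ref{th.nstformel2} is constructively valid actually hold for $\gK[x_1,\ldots,x_n]$ over a discrete algebraically closed $\gL$. This rests on constructive primary decomposition and on the effective Nullstellensatz over $\gL$ (to get the exponent $m$ with $v^m\in\cI$ from $v\in\sqrt\cI$), both of which are available in the constructive literature; I would cite the relevant sources (e.g. \cite{MRR}) and the effective Nullstellensatz, noting that discreteness of $\gL$ is exactly what makes the branching tests in the dynamical evaluation decidable. Once this is in place, the ``procédé explicite'' claimed in the theorem is obtained for free: the recursion above \emph{is} the algorithm — it runs through finitely many candidate triples $(Q_1,Q_2,Q_3)$ with $Q_1$ minimal over $\cI_1$, $Q_2$ minimal over $\cI_2+Q_1$, $Q_3$ minimal over $\cI_3+Q_2$, and for each one decidably tests the monoid conditions; in the negative case, unwinding the collapses through Lemma \ref{lem.coll} and Theorem \ref{th.colsim} yields the identity $(b)$, and finally the descent from $\gL$ back to $\gK$ is effective, completing the proof.
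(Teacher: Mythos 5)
Votre démonstration est correcte et suit, pour l'essentiel, la même stratégie que l'article~: tout se ramène au \nst formel (théorème \ref{th.nstformel2}) appliqué à $\gA=\Lx$, qui est un anneau de Lasker-Noether (l'article se contente de citer \cite{MRR} chap.~8, th.~9.6, là où vous proposez de re-vérifier les ingrédients — décomposition en premiers minimaux, appartenance explicite au radical — ce qui revient au même), avec le dictionnaire usuel entre variétés irréductibles de $\gL^n$ et idéaux premiers de $\Lx$, inclusions renversées comprises. La seule vraie divergence est l'étape de descente de l'identité $(b)$ de $\Lx$ vers $\Kx$~: vous l'obtenez par fidèle platitude de $\Kx\to\Lx$ (ou par un argument linéaire sur un système fini de générateurs, les $u_i$ étant déjà dans $\Kx$), tandis que l'article reste dans le cadre dynamique~: il rajoute à la \sad $S$ sur $\Kx$ les axiomes de clôture algébrique relatifs à $\gK$ et montre qu'un collapsus de la structure élargie $S'$ redescend en un collapsus de $S$ par division euclidienne par le polynôme unitaire $P$ annulant chaque nouveau paramètre, un paramètre à la fois. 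Votre variante est parfaitement recevable, y compris constructivement, à condition de l'expliciter un peu~: $\gK$ est discret (comme sous-corps du corps discret $\gL$), donc la résolubilité du système linéaire à coefficients dans $\Kx$ se teste par pivot (ou bases de Gröbner) sur $\gK$ et descend bien de $\gL$ à $\gK$ — le simple mot \og fidèle platitude \fg{} ne suffit pas en \maths constructives. L'argument de l'article achète deux choses en échange~: il n'utilise que des extensions monogènes unitaires produites dynamiquement, si bien qu'il fonctionne même lorsqu'on ne sait pas construire de clôture algébrique de $\gK$ (cf.\ remarque \ref{rem.algclos}), et il garde toute la preuve dans le formalisme des \sads sans invoquer de descente linéaire séparée; votre version est en revanche plus proche du réflexe classique et isole clairement le point (linéarité en les $j_i$ une fois les $u_i$ fixés) qui rend la descente possible.
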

\begin{proof}{Démonstration  (constructive).} Si $\gK=\gL$ c'est une 
conséquence immédiate du \nst formel donné au théorème 
\ref{th.nstformel2}. En effet on sait que l'anneau $\Lxn$ est un anneau 
de Lasker-Noether (cf. \cite{MRR} chap. 8 th. 9.6).\\ 
Si $\gK$ n'est pas algébriquement clos, on se ramène au cas 
précédent en vérifiant que la condition~$(b)$ vue dans $\Lx$ 
équivaut à la condition $(b)$ vue dans $\Kx$. Considérons la \sad 
$S$ définie dans la démonstration du théorème \ref{th.nstformel2}. Nous 
rajoutons les axiomes de cloture algébrique relatifs à $\gK$. Nous 
obtenons une \sad $S'$. Pour voir qu'un collapsus de $S'$ produit un 
collapsus de $S$, il suffit de considérer le cas où on a utilisé 
une seule fois un de ces nouveaux axiomes. On suppose donc qu'on a une 
égalité  $ u_1(u_2(u_3+j_3(x))+j_2(x))+j_1(x)= 0$ où $x$ est un 
nouveau paramètre et en présence d'un nouvel axiome $P(x)=0$ où 
$P$ est un \pol unitaire à coefficients dans $\gK$. En fait~$j_1(x)$ est explicité sous forme $\sum_k j_{1,k}R_{1,k}(x)$ avec les 
$j_{1,k}$ dans $\cI_1$. Même chose pour~$j_2(x)$ et~$j_3(x)$.   
Après division des $R_{i,j}$ par $P$ relativement à la variable 
$x$, on obtient une égalité 
$ u_1(u_2(u_3+j'_3(x))+j'_2(x))+j'_1(x)= P(x)Q(x)$ avec  
$\deg(j'_i[x])<\deg(P)$ ($i=1,2,3$). Cela implique que $Q$ est 
identiquement nul. Il reste alors à considérer le coefficient 
constant de cette identité lorsqu'on  voit le premier membre comme un 
\pol en $x$.
\end{proof}

\begin{remark} 
\label{rem.algclos}
{\rm Si on ne sait pas construire une cloture algébrique de $\gK$, on 
peut quand même faire l'évaluation dynamique de la structure $S'$ 
où on a rajouté les axiomes de cloture algébrique relatifs à 
$\gK$. Le théorème précédent admet alors la variante suivante: 
ou bien cette structure collapse et cela nous permet de produire une 
identité $(b)$ à coefficients dans $\gK$, ou bien on peut 
construire un système triangulaire d'équations et inéquations 
polynomiales à coefficients dans~$\gK$ tel que dans la branche 
correspondante de $S'$ (qui est une branche qui ne collapse surement 
pas) on ait la construction explicite des \ideps définissant les 
variétés irréductibles $V_i$, avec la démonstration que le point $(a)$ 
est bien vérifé.   
} 
\end{remark}

\section{Suites régulières et pseudo régulières } 
\label{sec regseq}

\begin{definition} 
\label{def.pseudoreg}
Soit  $(x_1,\ldots,x_\ell)$ dans un anneau commutatif $\gA$. 
\begin{itemize}
\item  On dit que la suite 
$(x_1,\ldots,x_\ell)$ est une {\em suite pseudo singulière} 
lorsqu'il existe \hbox{$a_1,\ldots,a_\ell\in \gA$} et $m_1,\ldots,m_\ell\in \NN$ 
tels que
$$\begin{array}{c} 
x_1^{m_1}(x_2^{m_2}\cdots(x_\ell^{m_\ell} (1+a_\ell x_\ell) + 
\cdots+a_2x_2) + a_1x_1) =   \\[.4em] 
x_1^{m_1} \cdots x_\ell^{m_\ell} +
a_\ell x_1^{m_1} \cdots x_{\ell-1}^{m_{\ell-1}}x_\ell^{1+m_\ell}  + 
\cdots+a_2x_1^{m_1} x_2^{1+m_2} + a_1x_1^{1+m_1} = 0
\end{array}$$
\item   On dit que la suite 
$(x_1,\ldots,x_\ell)$ est une {\em suite pseudo ré\-gu\-liè\-re} 
lorsque  pour tous $a_1,\ldots,a_\ell\in \gA$ et tous 
$m_1,\ldots,m_\ell\in \NN$ on~a\footnote{\label{nbpIneq}Dans un cadre \cofz, il est parfois préférable de considérer une relation d'inégalité $x\neq 0$ 
qui ne soit pas simplement l'impossibilité de $x=0$. Par exemple un 
nombre réel est dit $\neq 0$ lorsqu'il est inversible, \cad 
clairement non nul. Chaque fois que nous mentionnons une relation 
d'inégalité $x\neq 0$, nous supposons donc toujours implicitement 
que cette relation a été définie au préalable dans l'anneau que 
nous considérons. Nous demandons que cette relation soit une 
inégalité standard, \cad qu'elle puisse être démontrée 
équivalente à $\lnot(x=0)$ en utilisant le principe du tiers exclu. 
Nous demandons en outre que l'on ait constructivement  
$\; (x\neq 0,\; y=0)\; \Rightarrow\;  x+y\neq 0$,  
$\; xy\neq 0\Rightarrow \; x\neq 0,$ et
$\lnot(0\neq 0)$. Enfin  $x\neq y$ est défini par  $x-y\neq 0$.
En l'absence de précisions concernant  $x\neq 0$, on peut toujours 
considérer qu'il s'agit de la relation $\lnot(x=0)$. Lorsque l'anneau 
est un ensemble discret, \cad lorsqu'il possède un test d'égalité 
à zéro, on choisit toujours l'inégalité $\lnot(x=0)$. 
Néanmoins ce serait une erreur de principe grave de considérer que 
l'algèbre commutative ne doit travailler qu'avec des ensembles 
discrets.} 
$$ x_1^{m_1}(x_2^{m_2}\cdots(x_\ell^{m_\ell} (1+a_\ell x_\ell) + 
\cdots+a_2x_2) + a_1x_1) \neq  0
$$
\end{itemize}
\end{definition}
Une suite pseudo régulière ne contient ni inversible ni nilpotent. 
Dans le cas d'un anneau local  on se limite à examiner les cas avec 
les $x_i$ dans l'idéal maximal, et $(1+a_\ell x_\ell)$ peut être 
remplacé par $1$.

La proposition suivante est une conséquence immédiate du lemme 
\ref{lem.coll} qui caractérise le collapsus d'une \sad de type 
$\cP_{\ell,dir}$ (ici $\ell=4$)
\begin{proposition} 
\label{prop.pseudoreg} Soit $\gA$ un anneau commutatif et $(x_1,x_2,x_3)$ 
trois éléments de $\gA$. Soit $\gB$ la \sad de type $\cP_{4,dir}$ 
obtenue à partir du diagramme de $\gA$ comme anneau commutatif (avec le 
prédicat $x=0$), en rajoutant les relations suivantes dans la 
présentation 
$$ \U1(x_1),\; \J2(x_1),\; \U2(x_2),\; \J3(x_2),\; \U3(x_3),\; 
\J4(x_3)$$
Alors $\gB$ ne collapse pas \ssi la suite $(x_1,x_2,x_3)$ est pseudo 
régulière
\end{proposition}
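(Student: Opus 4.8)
Le plan est d'appliquer directement le lemme \ref{lem.coll}, dans sa version pour la théorie $\cP_{4,dir}$ (obtenue en y remplaçant $3$ par $4$ et en ajoutant les « trois petits points », comme annoncé en tête de la section \ref{sec tdychcr}), puis de redescendre l'identité de collapsus de $\Zg$ jusqu'à l'anneau $\gA$ lui-même, en exploitant le fait que deux des monoïdes ou idéaux qui interviennent sont triviaux.

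Concrètement, je prends pour ensemble de générateurs $G=\gA$ et pour $\Rzero$ le diagramme de $\gA$ comme anneau commutatif, de sorte que l'idéal $\Izero$ engendré par $\Rzero$ dans $\Zg=\ZZ[\gA]$ vérifie $\Zg/\Izero\cong\gA$. Les relations ajoutées dans la présentation donnent $\Ru1=\{x_1\}$, $\Rj2=\{x_1\}$, $\Ru2=\{x_2\}$, $\Rj3=\{x_2\}$, $\Ru3=\{x_3\}$, $\Rj4=\{x_3\}$, tandis que $\Rj1=\emptyset$ et $\Ru4=\emptyset$. On en déduit, après passage au quotient par $\Izero$ : $\Mu{i}=\{x_i^m : m\in\NN\}$ pour $i=1,2,3$, $\Ij{i+1}=x_i\gA$ pour $i=1,2,3$, et surtout $\Mu4=\{1\}$ et $\Ij1=(0)$, ces deux derniers étant engendrés par l'ensemble vide.

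Le lemme \ref{lem.coll}, dans sa version $\ell=4$, affirme alors que $\gB$ collapse si et seulement si il existe dans $\Zg$ une identité
$$u_1(u_2(u_3(u_4+j_4)+j_3)+j_2)+j_1+j_0=0$$
avec $u_i\in\Mu{i}$, $j_i\in\Ij{i}$ ($i=1,2,3,4$) et $j_0\in\Izero$. En passant à $\gA$ (quotient par $\Izero$) et en posant $u_4=1$, $j_1=0$, $u_1=x_1^{m_1}$, $u_2=x_2^{m_2}$, $u_3=x_3^{m_3}$, $j_2=a_1x_1$, $j_3=a_2x_2$, $j_4=a_3x_3$, cette identité devient exactement
$$x_1^{m_1}(x_2^{m_2}(x_3^{m_3}(1+a_3x_3)+a_2x_2)+a_1x_1)=0,$$
c'est-à-dire précisément la condition qui caractérise une suite pseudo singulière $(x_1,x_2,x_3)$ au sens de la définition \ref{def.pseudoreg}. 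La réciproque est immédiate : toute telle égalité dans $\gA$ se relève dans $\Zg$ en lui ajoutant l'opposé de l'expression calculée dans $\Zg$, qui est dans $\Izero$ puisque son image dans $\gA$ est nulle. Donc $\gB$ ne collapse pas si et seulement si $(x_1,x_2,x_3)$ n'est pas pseudo singulière, c'est-à-dire est pseudo régulière, l'inégalité $\neq 0$ de la définition \ref{def.pseudoreg} étant ici entendue comme la simple négation $\lnot(x=0)$.

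Je n'anticipe pas d'obstacle conceptuel : l'essentiel est d'identifier correctement les monoïdes et idéaux engendrés par les relations ajoutées, et de constater que la trivialité de $\Mu4$ et de $\Ij1$ fait dégénérer l'identité générale de collapsus du lemme \ref{lem.coll} en la seule forme « pseudo singulière ». Le seul point demandant un peu d'attention est de vérifier que la réduction préliminaire figurant dans la preuve du lemme \ref{lem.coll} — l'unique axiome de collapsus réellement utile est $\U1(0)\im\Faux$ — reste valable pour $\cP_{4,dir}$ ; c'est bien le cas, l'enchaînement des axiomes directs reliant les prédicats $\J{i}$ et $\U{i}$ propageant tout collapsus jusqu'à $\U1(0)$.
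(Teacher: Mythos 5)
Votre démonstration est correcte et suit essentiellement la même voie que celle de l'article : application directe du lemme \ref{lem.coll} (version $\ell=4$) à la présentation de $\gB$, avec l'identification $u_4=1$, $j_1=0$, $u_i=x_i^{m_i}$, $j_{i+1}=a_ix_i$, qui fait dégénérer l'identité générale de collapsus en la condition de suite pseudo singulière. Vos précisions supplémentaires (trivialité de $\Mu4$ et de $\Ij1$, passage par $\Izero$ pour relever l'égalité de $\gA$ dans $\Zg$) ne font qu'expliciter ce que la preuve de l'article laisse implicite.
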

\begin{proof}{Démonstration.} Vue la présentation de $\gB$, et vu le lemme 
\ref{lem.coll} un collapsus de $\gB$ donne une égalité dans $\gA$ du 
type
$$   u_1(u_2(u_3(u_4+j_4)+j_3)+j_2)+j_1= 0   $$
avec $u_1=x_1^{m_1}$, $u_2=x_2^{m_2}$, $u_3=x_3^{m_3}$, $u_4=1$, 
$j_1=0$, $j_2=a_1x_1$, $j_3=a_2x_2$, $j_4=a_3x_3$, c.-à-d.
$$   x_1^{m_1}(x_2^{m_2}(x_3^{m_3}(1+a_3x_3)+a_2x_2)+a_1x_1)= 0   $$
\end{proof}

Le lien avec les suites régulières est donné par la proposition 
suivante. 

\begin{proposition} 
\label{prop.regseq} Considérons un anneau commutatif $\gA$ et trois 
éléments $x_1,x_2,x_3$ de $\gA$. Si la suite $(x_1,x_2,x_3)$ est 
régulière alors elle est pseudo régulière.
\end{proposition}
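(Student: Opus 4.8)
The plan is to argue by contradiction and to use the three non-zero-divisor hypotheses to cancel, one after another, the factors $x_1^{m_1}$, $x_2^{m_2}$, $x_3^{m_3}$ after passing to the successive quotients $\gA$, $\gA/(x_1)$, $\gA/(x_1,x_2)$; the cancellations leave the forbidden relation $1\in(x_1,x_2,x_3)$, which contradicts the properness condition $(x_1,x_2,x_3)\neq\gA$ included in the definition of a regular sequence. Throughout, $\neq 0$ is read as $\lnot(\cdot=0)$ (the convention coherent with the usual non-zero-divisor notions), so the task reduces to showing that the displayed element cannot be $0$.

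Concretely, first I would fix $a_1,a_2,a_3\in\gA$ and $m_1,m_2,m_3\in\NN$ and assume, towards a contradiction, that
$$x_1^{m_1}\bigl(x_2^{m_2}(x_3^{m_3}(1+a_3x_3)+a_2x_2)+a_1x_1\bigr)=0.$$
A product — hence a power, since $x^0=1$ — of non-zero-divisors is a non-zero-divisor, so $x_1^{m_1}$ is a non-zero-divisor of $\gA$; therefore $B:=x_2^{m_2}(x_3^{m_3}(1+a_3x_3)+a_2x_2)+a_1x_1=0$. Reducing the identity $B=0$ modulo $(x_1)$ kills the monomial $a_1x_1$, so, writing $\bar a$ for the class of $a\in\gA$ in $\gA/(x_1)$, one gets $\bar x_2^{\,m_2}\bigl(\bar x_3^{\,m_3}(1+\bar a_3\bar x_3)+\bar a_2\bar x_2\bigr)=0$. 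Since $x_2$ is a non-zero-divisor of $\gA/(x_1)$, so is $\bar x_2^{\,m_2}$, whence $\bar x_3^{\,m_3}(1+\bar a_3\bar x_3)+\bar a_2\bar x_2=0$ in $\gA/(x_1)$. Reducing this in turn modulo $x_2$, i.e. mapping to $\gA/(x_1,x_2)$, kills the monomial $\bar a_2\bar x_2$, so, denoting by $\xi$ and $\alpha$ the images of $x_3$ and $a_3$ there, one gets $\xi^{m_3}(1+\alpha\xi)=0$ in $\gA/(x_1,x_2)$. Since $x_3$ is a non-zero-divisor of $\gA/(x_1,x_2)$, so is $\xi^{m_3}$, hence $1+\alpha\xi=0$ in $\gA/(x_1,x_2)$; lifting back to $\gA$ this reads $1+a_3x_3\in(x_1,x_2)$, i.e. $1\in(x_1,x_2,x_3)$, contradicting properness. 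Consequently the displayed element is $\neq 0$, and $(x_1,x_2,x_3)$ is pseudo regular.

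I do not expect a genuine obstacle here: the argument is simply the ``cancel a non-zero-divisor in each successive quotient'' induction, it is manifestly constructive (no use of excluded middle or of a choice principle), and it works verbatim for a sequence $(x_1,\dots,x_\ell)$ once the three dots are inserted. The only points requiring care are bookkeeping: the notion of \emph{regular sequence} must include the condition $(x_1,\dots,x_\ell)\neq\gA$ — this is indispensable, since a sequence generating the unit ideal is never pseudo regular (take all $m_i=0$), so the proposition would otherwise be false — and one uses the default inequality $\lnot(\cdot=0)$, the convention matching the non-zero-divisor hypotheses, so that ``the element cannot be $0$'' is exactly what has to be shown. (One could equivalently phrase the proof dynamically, via Proposition~\ref{prop.pseudoreg} and Lemma~\ref{lem.coll}: a collapse of $\gB$ produces an identity of the above type, which the same computation shows to be incompatible with the regular-sequence hypothesis.)
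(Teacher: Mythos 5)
Votre démonstration est correcte et suit essentiellement celle de l'article : on simplifie par $x_1^{m_1}$ (non diviseur de zéro), puis par $x_2^{m_2}$ modulo $x_1\gA$, puis par $x_3^{m_3}$ modulo $x_1\gA+x_2\gA$, pour aboutir à $1+a_3x_3\in x_1\gA+x_2\gA$, ce qui contredit $x_1\gA+x_2\gA+x_3\gA\neq\gA$. La seule différence est cosmétique : l'article écrit les simplifications avec des coefficients explicites ($dx_1$, $ex_1+e'x_2$) là où vous passez aux anneaux quotients, et il utilise lui aussi l'inégalité par défaut $\lnot(\,\cdot=0)$ en montrant l'impossibilité de l'égalité.
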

\begin{proof}{Démonstration.}
Supposons la suite régulière et montrons qu'une égalité 
$   x_1^{m_1}(x_2^{m_2}(x_3^{m_3}(1+a_3x_3)+a_2x_2)+a_1x_1)= 0   $ est 
impossible.
Si on a une égalité de ce type, comme $x_1$ est \ndz on en déduit 
$   x_2^{m_2}(x_3^{m_3}(1+a_3x_3)+a_2x_2)+a_1x_1= 0   $.\\ 
Comme $x_2$ est \ndz modulo $x_1\gA$ on en déduit  
$x_3^{m_3}(1+a_3x_3)+a_2x_2=dx_1$.\\
Donc $x_3^{m_3}(1+a_3x_3)\in (x_1\gA+x_2\gA)$. Puisque $x_3$ est \ndz 
modulo $x_1\gA+x_2\gA$ on obtient  $1+a_3x_3=ex_1+e'x_2$ ce qui est 
contradictoire avec $x_1\gA+x_2\gA+x_3\gA\neq \gA$.
\end{proof}
La version ``\nst formel'' de la proposition 
\ref{prop.pseudoreg}, que l'on déduit immédiatement des théorème 
\ref{th.nstformel} et
\ref{th.nstformel2},  est le théorème suivant.
\begin{theorem} 
\label{th.pseudoreg} {\em (suites pseudo régulières et chaînes 
croissantes d'\idepsz).}
\begin{description}
\item [(en \maths classiques)] Dans un anneau $\gA$ une suite 
$(x_1,x_2,x_3)$ est pseudo régulière \ssi il existe quatre \ideps 
$P_1\subset P_2\subset P_3\subset P_4$ avec $x_1\in P_2\setminus P_1$, 
$x_2\in P_3\setminus P_2$ et $x_3\in P_4\setminus P_3$.
\item [(en \maths constructives)] Ce théorème est valable 
constructivement lorsque $\gA$ est un anneau de Lasker-Noether. (cf. 
\cite{MRR})
\end{description}
\end{theorem}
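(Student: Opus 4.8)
The plan is to obtain Theorem~\ref{th.pseudoreg} as a mere instantiation of the formal \nsts already established, read with $\ell=4$: a three-term sequence is being tested against a chain of four nested primes, so one works inside the dynamical theory $\cP_{4,dir}$ of Section~\ref{sec tdychcr}, the ``three dots'' convention of that section turning the $\ell=3$ statements of Theorems~\ref{th.nstformel} and~\ref{th.nstformel2} into their $\ell=4$ analogues. Nothing new has to be invented; the work is to feed the right data into those theorems and read off both sides.

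First I would apply the $\ell=4$ version of Theorem~\ref{th.nstformel} to the data
$$J_1=\{0\},\quad J_2=\{x_1\},\quad J_3=\{x_2\},\quad J_4=\{x_3\},\qquad U_1=\{x_1\},\quad U_2=\{x_2\},\quad U_3=\{x_3\},\quad U_4=\{1\}.$$
The ideal generated by $J_1$ is $(0)$, the ideal generated by $J_{i+1}$ is $x_i\gA$ for $i=1,2,3$, the monoid generated by $U_i$ is $\{x_i^m:m\in\NN\}$ for $i=1,2,3$, and the monoid generated by $U_4$ is $\{1\}$. Hence an identity of type $(*)$, namely $u_1(u_2(u_3(u_4+j_4)+j_3)+j_2)+j_1=0$, is after renaming exactly an identity $x_1^{m_1}(x_2^{m_2}(x_3^{m_3}(1+a_3x_3)+a_2x_2)+a_1x_1)=0$, and conversely; this is the computation already carried out in the proof of Proposition~\ref{prop.pseudoreg}. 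So clause $(b)$ of Theorem~\ref{th.nstformel} holds for this data precisely when $(x_1,x_2,x_3)$ is pseudo singular, that is, precisely when it is \emph{not} pseudo regular.

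On the prime side, clause $(a)$ for this data reads: there is no chain $P_1\subset P_2\subset P_3\subset P_4$ of primes with $J_i\subset P_i$ and $U_i\cap P_i=\emptyset$. Since $J_1=\{0\}\subset P_1$ and $1\notin P_4$ hold automatically (primes are proper), the surviving constraints are exactly $x_1\in P_2\setminus P_1$, $x_2\in P_3\setminus P_2$, $x_3\in P_4\setminus P_3$. Thus $(a)$ is the negation of the existence of such a chain. Taking the contrapositive of the equivalence $(a)\Leftrightarrow(b)$ supplied by Theorem~\ref{th.nstformel} gives: $(x_1,x_2,x_3)$ is pseudo regular if and only if there exist four primes $P_1\subset P_2\subset P_3\subset P_4$ with those three memberships. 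This is the classical statement.

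For the constructive statement I would repeat the same bookkeeping but invoke Theorem~\ref{th.nstformel2} in place of Theorem~\ref{th.nstformel}: when $\gA$ is a Lasker-Noether ring it proves the equivalence constructively and, moreover, yields an algorithm producing either an identity $(*)$ (a witness of pseudo singularity) or explicit finite generating families for four finitely generated primes $P_1\subset P_2\subset P_3\subset P_4$ realising clause $(a)$. I do not expect a genuine obstacle here; the only points needing care are the index shift (three elements but four primes, hence $\cP_{4,dir}$ rather than $\cP_{3,dir}$) and the verification that the passage between identities $(*)$ and pseudo-singular identities is bijective in both directions, which is immediate from the descriptions of the monoids $\langle U_i\rangle$ and ideals $(J_i)$ computed above.
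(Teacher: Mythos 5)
Your proposal is correct and follows essentially the same route as the paper: the paper obtains Théorème~\ref{th.pseudoreg} precisely as the immediate instantiation (via la proposition~\ref{prop.pseudoreg}, i.e.\ the data $\U1(x_1),\J2(x_1),\U2(x_2),\J3(x_2),\U3(x_3),\J4(x_3)$ in $\cP_{4,dir}$) of the $\ell=4$ versions of the théorèmes~\ref{th.nstformel} et~\ref{th.nstformel2}, exactly the bookkeeping you carry out.
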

\section{Vers une théorie constructive de la dimension de Krull} 
\label{sec Krull}

Ainsi, le fait qu'un anneau est de dimension $\geq \ell$ équivaut (en 
\maths classiques)~à l'existence d'une suite pseudo régulière de 
longueur $\ell$. 
Il serait intéressant de développer un traitement constructif de la 
théorie de la dimension de Krull basé sur les suites  
pseudo régulières. On donnerait les définitions suivantes.
\begin{definition} 
\label{def.dimKrull}
Un anneau $\gA$ est dit {\em de dimension $-1$} lorsque $1_\gA=0_\gA$. Dans 
le cas contraire il est dit de dimension $\geq 0$. Soit maintenant 
$\ell\geq 1$. L'anneau est dit {\em de dimension $\geq \ell$} s'il 
existe une suite pseudo régulière de longueur $\ell$. Il est dit 
{\em de dimension $\leq \ell-1$} si toute suite  $(x_1,\ldots,x_\ell)$ 
de longueur $\ell$ dans $\gA$ est pseudo singulière. 
L'anneau est dit {\em de dimension $\ell$} s'il est~à la fois de 
dimension $\ge\ell$ et $\le\ell$.
Il est dit {\em  de dimension $<\ell$ } lorsqu'il est impossible qu'il 
soit de dimension $\geq \ell$.
\end{definition}

Notez que $\RR$ est un anneau local de dimension $<1$, mais qu'on ne
 peut pas prouver constructivement qu'il est de dimension $\leq 0$.

Un anneau est local zéro-dimensionnel \ssi on a
$$\forall x\in \gA\; \; \; x \; \; {\rm est\; inversible\; ou\;
nilpotent}
$$

Il serait souhaitable d'obtenir, sous des hypothèses convenables une 
forme constructive générale du Principal Ideal Theorem de Krull.

Ce pourrait être quelque chose du genre suivant.

\begin{theorem} 
\label{th.PIT} Soit $\gA$ un anneau local noetherien cohérent 
à idéaux détachables dont l'idéal maximal $\cM$ est de type
fini. Il existe $x_1,\ldots,x_\ell\in\cM$ et un entier 
$n\in \NN$ tels que
$$\cM^n \subset x_1 \gA + \cdots + x_\ell \gA 
$$
\ssi $\gA$ est de dimension 
$\leq  \ell$.
\end{theorem}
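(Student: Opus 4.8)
The plan is to deduce this statement from the pseudo-regular-sequence characterization of Krull dimension (Definition \ref{def.dimKrull}) together with the formal Nullstellensatz for chains of primes (Theorems \ref{th.nstformel} and \ref{th.nstformel2}), exploiting that under the stated hypotheses $\gA$ is a Lasker-Noether ring so that the constructive version applies. First I would record the easy direction: if $\cM^n \subset x_1\gA + \cdots + x_\ell\gA$, then any prime ideal containing the maximal ideal of the localization at a prime — in fact, going up a chain $P_0 \subsetneq \cdots \subsetneq P_\ell$ — would, after localizing, force the images of $x_1,\ldots,x_\ell$ to generate an ideal whose radical is the maximal ideal, contradicting the existence of a pseudo-regular sequence of length $\ell$ built from the $x_i$; concretely, a chain of length $\ell+1$ would make $(x_1,\ldots,x_\ell)$ pseudo-regular by Theorem \ref{th.pseudoreg}, and the inclusion $\cM^n \subset \sum x_i\gA$ contradicts this because it produces an identity of type $(*)$. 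So $\gA$ has dimension $\le \ell$.

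For the converse — the substantive direction, which is the constructive content of Krull's Principal Ideal Theorem — I would argue as follows. Assume $\gA$ is of dimension $\le \ell$, i.e. every length-$(\ell+1)$ sequence is pseudo-singular. The goal is to construct $x_1,\ldots,x_\ell \in \cM$ with $\cM^n \subset \sum x_i\gA$ for some $n$. The natural approach is to build the $x_i$ one at a time, at each stage choosing $x_{i+1} \in \cM$ avoiding all the minimal primes over $x_1\gA + \cdots + x_i\gA$ that are not already equal to $\cM$ (this is possible constructively because $\gA$ is Lasker-Noether, so these minimal primes are finitely many and explicitly computable, and $\cM$, being maximal and not among the others unless the process has terminated, is not covered by their union — here one uses prime avoidance, valid constructively when the primes are detachable and finitely generated). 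After $\ell$ steps the chain of minimal primes over the successive ideals cannot grow any further without producing a pseudo-regular sequence of length $\ell+1$ (by Theorem \ref{th.pseudoreg}, reading a strict chain $P_1 \subsetneq \cdots \subsetneq P_{\ell+1}$ off the construction and the elements $x_i \in P_{i+1}\setminus P_i$), which is excluded by hypothesis; hence every minimal prime over $x_1\gA + \cdots + x_\ell\gA$ must equal $\cM$, so $\sqrt{x_1\gA + \cdots + x_\ell\gA} = \cM$. Coherence, detachability of ideals, and finite generation of $\cM$ then give, constructively, an integer $n$ with $\cM^n \subset x_1\gA + \cdots + x_\ell\gA$.

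The hard part will be making the inductive construction genuinely constructive and terminating: one must be sure that at each stage one can \emph{decide} whether a given minimal prime over the current ideal equals $\cM$ (this uses detachability and the fact that $\cM$ is explicitly given), that prime avoidance can be performed effectively to produce the next $x_{i+1}$, and above all that the process must stop after at most $\ell$ steps rather than merely "eventually" — this last point is exactly where the dimension hypothesis enters, via the contrapositive of Theorem \ref{th.pseudoreg}: a properly ascending chain of primes of length $\ell+1$, each witnessed by one of the $x_i$, would contradict pseudo-singularity of every length-$(\ell+1)$ sequence. A secondary subtlety is extracting the uniform exponent $n$ from $\sqrt{x_1\gA+\cdots+x_\ell\gA}=\cM$ when $\cM$ is finitely generated and the ring is coherent and Noetherian; this is a routine but non-trivial constructive argument (bound the exponent needed for each generator of $\cM$ and take the product, or a sufficiently large power). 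Everything else — the easy direction and the translation between identities of type $(*)$, collapsus of the associated dynamical algebraic structure, and chains of primes — is immediate from Lemma \ref{lem.coll}, Theorem \ref{th.colsim}, and Theorem \ref{th.pseudoreg}.
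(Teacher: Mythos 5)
You should first be aware that the paper does not prove Theorem \ref{th.PIT} at all. It is introduced by \gui{Il serait souhaitable d'obtenir, sous des hypothèses convenables, une forme constructive générale du Principal Ideal Theorem de Krull. Ce pourrait être quelque chose du genre suivant}: the statement is put forward as a desideratum, a shape that a future constructive PIT might take, and the historical note confirms that this programme was only carried out later (Coquand, Lombardi, et al.). So there is no proof in the paper to compare yours with, and your plan must stand on its own; as it stands, it does not.

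The central gap is in the direction you call \emph{easy}. With Definition \ref{def.dimKrull}, proving that $\cM^n \subset x_1\gA+\cdots+x_\ell\gA$ forces dimension $\leq \ell$ means exhibiting, for an \emph{arbitrary} sequence $(y_1,\ldots,y_{\ell+1})$ of elements of $\gA$, an explicit identity witnessing its pseudo-singularity. This is precisely the content of Krull's Hauptidealsatz (the genuinely hard half, classically proved by the symbolic-power induction), and your sketch does not engage with it: a chain of $\ell+2$ primes obtained via Theorem \ref{th.pseudoreg} from the pseudo-regularity of $(y_1,\ldots,y_{\ell+1})$ gives no reason whatsoever for your chosen $x_i$ to lie in $P_{i+1}\setminus P_i$, and the assertion that the inclusion $\cM^n \subset \sum_i x_i\gA$ \gui{produces an identity of type $(*)$} is exactly what has to be proved --- nothing in Lemma \ref{lem.coll} or Theorem \ref{th.colsim} delivers it. A second, independent problem is that your converse direction silently strengthens the hypotheses: a local Noetherian coherent ring with detachable ideals and finitely generated maximal ideal is \emph{not}, constructively, known to be a Lasker--Noether ring, so the explicit list of minimal primes over $(x_1,\ldots,x_i)$, the test whether such a prime equals $\cM$, effective prime avoidance, and the constructive half of Theorem \ref{th.pseudoreg} (stated only for Lasker--Noether rings) are all unavailable under the hypotheses of Theorem \ref{th.PIT}. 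Classically your prime-avoidance induction is the standard construction of a system of parameters, but then the whole argument reduces to the classical PIT plus Theorem \ref{th.pseudoreg}, which is not the constructive theorem the statement envisages.
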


\subsubsection*{Dimension de Krull d'un anneau de \pols sur un corps
discret} 
Dans ce paragraphe, nous donnons une démonstration particulièrement 
élémentaire de la dimension de Krull d'un anneau de \pols sur un 
corps discret. Cette démonstration est également valable en \clama puisqu'on 
a établi le théorème \ref{th.pseudoreg}. 

Nous avons tout d'abord.
\begin{proposition} 
\label{propKrDimetDegTr} 
Soit $\gK$ un corps discret, $\gA$ une $\gK$-algèbre commutative, et 
$x_1$, \ldots, $x_\ell$ dans $\gA$  algébriquement dépendants sur 
$\gK$. Alors la suite
$(x_1,\ldots,x_\ell)$ est pseudo singulière.
\end{proposition}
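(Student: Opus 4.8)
The plan is to extract, from a single algebraic dependence relation for $x_1,\ldots,x_\ell$ over $\gK$, one \emph{dominant} monomial together with the correction terms that assemble into the nested expression of Definition~\ref{def.pseudoreg}. So first I would use the hypothesis directly: algebraic dependence gives a nonzero polynomial $P\in\gK[X_1,\ldots,X_\ell]$ with $P(x_1,\ldots,x_\ell)=0$. Since $\gK$ is discrete, the finite set of monomials occurring in $P$ with nonzero coefficient is explicit, so I may select among them the smallest one, $X^{\alpha^\ast}$, for the lexicographic order associated to $X_1>X_2>\cdots>X_\ell$. Write $(m_1,\ldots,m_\ell):=\alpha^\ast$ and divide $P$ by the (invertible) coefficient of $X^{\alpha^\ast}$, so that this coefficient becomes $1$.

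The combinatorial core is then elementary. For any monomial $X^\alpha$ occurring in $P$ with $\alpha\neq\alpha^\ast$ one has $\alpha>_{\mathrm{lex}}\alpha^\ast$; letting $k$ be the first index at which $\alpha$ and $\alpha^\ast$ disagree, this means $\alpha_j=m_j$ for $j<k$ and $\alpha_k\geq m_k+1$, hence $X^\alpha$ is divisible by $M_k:=X_1^{m_1}\cdots X_{k-1}^{m_{k-1}}X_k^{m_k+1}$. Grouping the monomials of $P$ other than $X^{\alpha^\ast}$ according to this index $k$, I obtain a decomposition $P=X^{\alpha^\ast}+\sum_{k=1}^{\ell}Q_k\,M_k$ with $Q_k\in\gK[X_1,\ldots,X_\ell]$.

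Finally I would specialise $X_i\mapsto x_i$ and set $a_k:=Q_k(x_1,\ldots,x_\ell)\in\gA$. Since $M_k(x_1,\ldots,x_\ell)=x_1^{m_1}\cdots x_{k-1}^{m_{k-1}}x_k^{m_k+1}$, the relation $0=P(x_1,\ldots,x_\ell)$ becomes
$$x_1^{m_1}\cdots x_\ell^{m_\ell}+a_\ell x_1^{m_1}\cdots x_{\ell-1}^{m_{\ell-1}}x_\ell^{1+m_\ell}+\cdots+a_2x_1^{m_1}x_2^{1+m_2}+a_1x_1^{1+m_1}=0,$$
which is exactly the expanded pseudo singularity identity of Definition~\ref{def.pseudoreg} for these $a_i\in\gA$ and $m_i\in\NN$. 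Every step --- listing the monomials of $P$, selecting the lexicographically smallest one, dividing by the leading coefficient --- is effective over a discrete field, so the argument is constructive.

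I do not expect a real obstacle; the one point requiring care is the bookkeeping in the grouping step, namely that the monomials attached to an index $k$ are precisely multiples of $M_k$, and that $M_1,\ldots,M_\ell$ are exactly the monomials appearing in the expansion of $x_1^{m_1}(x_2^{m_2}(\cdots(x_\ell^{m_\ell}(1+a_\ell x_\ell)+\cdots)+a_2x_2)+a_1x_1)$. The degenerate cases ($\gA$ trivial, or $P$ a single monomial, or $\alpha^\ast=(0,\ldots,0)$) are covered by the same formula and need no separate treatment.
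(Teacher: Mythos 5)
Your proof is correct and follows essentially the same route as the paper: take one algebraic dependence relation, normalise the lexicographically smallest monomial $x_1^{m_1}\cdots x_\ell^{m_\ell}$ to have coefficient $1$, and group the remaining monomials by the first index where their exponent vector exceeds $(m_1,\ldots,m_\ell)$, which yields exactly the expanded pseudo singularity identity of Definition~\ref{def.pseudoreg}. The only difference is that you spell out the grouping argument that the paper summarises by \guillemotleft{} il est clair que $Q$ s'écrit sous forme\dots \guillemotright.
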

\begin{proof}{Démonstration.}
Soit $Q(x_1,\ldots,x_\ell)=0$ une relation de dépendance algébrique
sur $\gK$. Ordonnons les monômes de $Q$ dont le coefficient est non nul 
selon l'ordre lexicographique. On peut supposer \spdg que le 
coefficient du premier monôme non nul (pour cet ordre) est égal à 
$1$. Si $x_1^{m_1}x_2^{m_2}\cdots x_\ell^{m_\ell}$ est ce monôme, il 
est clair que $Q$ s'écrit sous forme
$$ Q=x_1^{m_1}\cdots x_\ell^{m_\ell}+ 
x_1^{m_1}\cdots x_\ell^{1+m_\ell}R_\ell+
x_1^{m_1}\cdots x_{\ell-1}^{1+m_{\ell-1}}R_{\ell-1}+\cdots+
x_1^{m_1}x_2^{1+m_2}R_2+ x_1^{1+m_1}R_1
$$
et l'égalité  $Q=0$  fournit donc le collapsus recherché.
\end{proof}

On en déduit.
\begin{theorem} 
\label{thKDP} Soit $\gK$ un corps discret. La dimension de Krull de 
l'anneau $\gK[X_1,\ldots,X_\ell]$ est égale à $\ell$.
\end{theorem}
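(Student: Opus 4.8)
Le plan est d'établir séparément, au sens de la définition \ref{def.dimKrull}, les deux inégalités $\dim\gK[X_1,\ldots,X_\ell]\geq\ell$ et $\dim\gK[X_1,\ldots,X_\ell]\leq\ell$~; l'égalité avec la dimension de Krull au sens usuel en résultera alors via le théorème \ref{th.pseudoreg}.

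Pour la minoration, je remarquerai que $(X_1,\ldots,X_\ell)$ est une suite régulière de $\gK[X_1,\ldots,X_\ell]$~: pour chaque $i$, le quotient $\gK[X_1,\ldots,X_\ell]/(X_1,\ldots,X_{i-1})$ est isomorphe à $\gK[X_i,\ldots,X_\ell]$, donc intègre, de sorte que $X_i$ y est non diviseur de zéro, et $(X_1,\ldots,X_\ell)$ est un idéal propre. La proposition \ref{prop.regseq} (dans sa généralisation évidente à $\ell$ éléments, conformément à la convention de la section \ref{sec tdychcr}) donnera alors que la suite $(X_1,\ldots,X_\ell)$ est pseudo régulière, d'où dimension $\geq\ell$. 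On pourrait aussi exhiber la chaîne de $\ell+1$ \ideps $(0)\subset(X_1)\subset(X_1,X_2)\subset\cdots\subset(X_1,\ldots,X_\ell)$ (chacun premier, le quotient correspondant étant intègre) et invoquer le théorème \ref{th.pseudoreg}.

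Pour la majoration, d'après la définition \ref{def.dimKrull} il s'agira de montrer que toute suite $(y_1,\ldots,y_{\ell+1})$ de longueur $\ell+1$ dans $\gK[X_1,\ldots,X_\ell]$ est pseudo singulière~; or, la proposition \ref{propKrDimetDegTr} ramène cela à produire explicitement une relation de dépendance algébrique sur $\gK$ entre $y_1,\ldots,y_{\ell+1}$. Si l'un des $y_i$ est nul, l'indéterminée $T_i$ convient~; sinon je poserai $d=\max_i\deg(y_i)$ et je considérerai, pour $N\in\NN$, les monômes $y_1^{e_1}\cdots y_{\ell+1}^{e_{\ell+1}}$ avec $e_1+\cdots+e_{\ell+1}\leq N$. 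Ils appartiennent tous au $\gK$-espace vectoriel des \pols de degré $\leq dN$ en $X_1,\ldots,X_\ell$, dont la dimension croît comme $(dN)^\ell/\ell!$, alors que leur nombre croît comme $N^{\ell+1}/(\ell+1)!$~; pour $N$ assez grand ce nombre dépasse strictement cette dimension, et une élimination de Gauss (effective puisque $\gK$ est discret) produira une relation $\gK$-linéaire non triviale entre ces monômes, c.-à-d. un \pol non nul $Q\in\gK[T_1,\ldots,T_{\ell+1}]$ tel que $Q(y_1,\ldots,y_{\ell+1})=0$. Par la proposition \ref{propKrDimetDegTr} la suite $(y_1,\ldots,y_{\ell+1})$ est pseudo singulière, et comme elle est arbitraire on conclura que l'anneau est de dimension $\leq\ell$, donc finalement de dimension $\ell$.

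L'étape véritablement substantielle est la majoration, et en son cœur le passage rendant la dépendance algébrique explicite~: c'est la comparaison asymptotique des deux croissances (ordre $N^{\ell+1}$ contre ordre $N^\ell$) qui garantit l'existence d'un entier $N$ convenable, la discrétude de $\gK$ rendant ensuite effective la recherche du \pol $Q$. Tout le reste se ramène mécaniquement aux résultats déjà acquis sur les suites régulières, pseudo régulières et pseudo singulières.
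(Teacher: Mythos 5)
Votre démonstration est correcte et suit essentiellement la même voie que celle de l'article~: minoration en observant que $(X_1,\ldots,X_\ell)$ est régulière donc pseudo régulière par la proposition \ref{prop.regseq}, majoration via la proposition \ref{propKrDimetDegTr}. La seule différence est que vous explicitez (par l'argument de comptage et l'élimination de Gauss, légitime puisque $\gK$ est discret) le fait standard que $\ell+1$ éléments quelconques de $\gK[X_1,\ldots,X_\ell]$ sont algébriquement dépendants sur $\gK$, fait que l'article laisse implicite.
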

\begin{proof}{Démonstration.}
Vue la proposition \ref{propKrDimetDegTr} il suffit de vérifier que 
la suite $(X_1,\ldots,X_\ell)$ est pseudo régulière. Or elle est 
régulière. 
\end{proof}


\newpage\rdb
\setcounter{section}{1}
\def\thesection{\Alph{section}}  
\addcontentsline{toc}{section}{Annexe~: Théories dynamiques et \sads}
\section*{Annexe~: Théories dynamiques et \sads}\label{secsadyn}
\markboth{Annexe}{Annexe}
Cette annexe reprend, pour la commodité de la lecture, des 
définitions données dans \cite{clr} et \cite{lom98} (voir aussi 
\cite{lom95,lom97}).  

Une \sad est une {\em structure algébrique incomplètement 
spécifiée}. Pour cela on donne les axiomes que doit vérifier la 
structure, des générateurs et des relations. Le fait que la 
structure est {\em  algébrique} tient à ce que les axiomes ont une 
structure particulièrement simple. Ils peuvent être mis en œuvre 
à l'intérieur d'un calcul arborescent dans lequel la logique sous 
sa forme usuelle sophistiquée ne joue plus aucun role. Aucune formule 
compliquée avec connecteurs ou quantificateurs n'est mise en œuvre. 
Seuls apparaissent les prédicats donnés au départ dans la 
structure. En particulier, on peut éviter tout recours à la 
négation. Et le calcul mis en œuvre est, le long de chaque branche 
du calcul arborescent, de l'algèbre pure, en fait surtout de la 
réécriture. L'arborescence elle-même est gouvernée par des 
axiomes simples. Elle est pour l'essentiel une discussion cas par cas.

Les méthodes dynamiques pour l'interprétation constructive de 
résultats et de méthodes d'algèbre abstraite sont inspirées de 
la clôture algébrique dynamique implantée en AXIOM suite aux 
travaux de Dominique Duval (cf. \cite{ddd,dd,dr1,dr2})

Nous passons maintenant à des définitions plus formelles. 

\paragraph{Une  théorie dynamique} $\cT=(\cL\vert\cD)$ est donnée 
par \npb
\label{para thdy}

\begin{itemize}
\item Un langage $\cL$ comportant des constructeurs de termes (les 
termes sont construits à partir de variables, de paramètres, de 
constantes et de symboles de fonctions précisés), et des symboles 
de prédicat, dont au moins l'égalité.  (Notez que les seules 
formules à notre disposition sont les formules atomiques puisque le 
langage ne comporte ni connecteur ni quantificateur.)
\item  Un ensemble $\cD$ de règles de déduction, appelées 
axiomes, de nature élémentaire (on va préciser cela), formulés 
sans utilisation des paramètres.
\end{itemize}

\ss Les axiomes sont des {\em règles dynamiques}, c.-à-d. des 
règles de déduction du type général suivant~: 

\axio{H(\gx) }{  \left(\E \gy^1 A_1(\gx,\gy^1) \ou \ldots \ou \E \gy^k
A_k(\gx,\gy^k)\right)}{{\rm Nom-Ax}}

\noi    
Les $ A_i(\gx,\gy^i)$ et $ H(\gx)$ sont des listes de
prédicats portant sur des termes construits avec le langage de 
départ, {\em  sans utilisation des paramètres{\footnote{Nous 
utiliserons souvent par abus de notation les mêmes lettres pour 
désigner les variables (présentes uniquement dans les axiomes) et 
les paramètres (présents dans les faits, mais non dans les 
axiomes). La distinction entre paramètres et variables est en fait 
bien utile pour éviter les précautions usuelles difficiles à 
formuler qui gouvernent la légitimité du remplacement d'une 
variable muette par une variable non muette. Notre point de vue est 
d'appeler paramètre toute variable non muette et d'adopter des noms 
radicalement distincts pour les variables et pour les paramètres.}}}. 
Un $ \forall \gx $ est implicite devant l'axiome. Le $\E$ a la 
signification d'un ``il existe'', la virgule dans une liste a la 
signification d'un ``et''. Les variables présentes dans la règle 
sont d'une part les $x_h$ dans $\gx$, d'autre part les $y^i_j$ dans les 
$\gy^i$. Aucune variable $y^i_j$ n'est une variable $x_h$. Mais on peut 
avoir des variables communes dans deux listes $\gy^i$, car les 
$\E \gy^i A_i(\gx,\gy^i)$ sont indépendants les uns des autres.

Un axiome est dit {\em disjonctif}  s'il y a effectivement des  
``$\ou$'', il est dit {\em existentiel}  s'il y a effectivement des 
variables dans une liste $\gy^i$  (c.-à-d. si les listes $\gy^i$ ne 
sont pas toutes vides). 
Un axiome qui n'est ni disjonctif ni existentiel est dit {\em 
universel} ou {\em purement algébrique}.
Une \tdy est dite {\em purement algébrique} si tous ses axiomes sont 
purement algébriques.

Dans la liste des axiomes, il y a les {\em  axiomes logiques} usuels 
relatifs à l'égalité, écrits sous forme de \rdysz.

\paragraph{Une structure algébrique dynamique}\npb  $S=(\cT,\cP)$ est 
donnée par~:
\label{para sady}

\sni --- une \tdy $\cT=(\cL\vert\cD)$ qui définit {\em le type 
abstrait de la structure algébrique}, d'une part,
 
\sni --- une {\em présentation $\cP=(G\vert R)$ de la structure 
dynamique} $S$ d'autre part, donnée comme suit
\begin{itemize}
\item Un ensemble $G$ de {\em générateurs}, qu'on peut voir comme 
de nouvelles constantes. Nous dirons parfois qu'il s'agit des 
paramètres de départ, particulièrement s'ils ne figurent dans 
aucune relation. Naturellement $\cL\cap G=\emptyset$.
\item Et un ensemble de relations $R$ qui sont des {\em formules 
(atomiques) $S$-closes}, c.-à-d. des prédicats de $\cL$ portant sur 
des {\em  termes $S$-clos} (c.-à-d. des termes construits à partir 
des constantes de $\cL$ et des générateurs de $S$ au moyen des 
symboles de fonction de $\cL${\footnote{Un terme $S$-clos ne contient 
donc ni variables ni paramètres (hormis les générateurs si on les 
considère comme des paramètres de départ.) Nous dirons parfois 
{\em terme clos de $S$} à la place de ``terme $S$-clos''.}}). On peut 
voir les relations dans $R$ comme~:  les faits donnés au départ 
comme vrais dans la structure. On les appellera les {\em relations de 
départ}.
\end{itemize}

Un {\em fait} concernant une \sad 
$S=(\cT, \cP)=((\cL\vert \cD),(G\vert R))$ est une affirmation~: 

\sni\centerline{
 Tel(s) terme(s) $S$-clos satisfait(font) tel prédicat (donné dans 
la structure).
}
  
\sni On pourrait aussi dire que c'est un ``fait brut''{\footnote{De 
manière générale, l'ensemble des assertions exprimables sous 
forme de faits dépend de manière critique du langage utilisé. Un 
langage plus riche permet d'exprimer plus de propriétés sous forme 
de faits. Par exemple si pour la structure d'anneau, on n'introduisait 
ni la constante $-1$, ni l'opération $ x \mapsto -x$, on serait 
contraint~à des périphrases bien encombrantes.}}.  
Nous donnons plus loin une définition un peu plus formelle de la 
même chose. Nous avons besoin de préciser auparavant ce que sont 
les démonstrations dynamiques.
\paragraph{Utilisation légitime d'un axiome comme \rdyz}~
\label{para axiomes}

\noi De même qu'une théorie purement équationnelle débouche sur 
un {\em calcul algébrique pur}, 
n'u\-ti\-li\-sant rien d'autre que de la réécriture de manière 
systématique (le calcul ne fait pas intervenir de logique si ce 
n'est, implicitement, au travers des axiomes relatifs à 
l'égalité), de même une \tdy ne doit pas être vue au premier 
chef comme une variante des théories logiques du premier ordre, mais 
comme une {\em pure machinerie calculatoire arborescente}.  
 
\ss Les axiomes n'ont pas pour signification de donner des énoncés 
élémentaires vrais, mais {\em d'être utilisés comme règles de 
déduction}, ou plus précisément comme {\em  règles de 
constructions d'\evdsz}. C'est pour cette raison, et non par 
provocation, que nous n'avons utilisé dans les \rdys aucun des 
symboles usuels servant à écrire des formules du premier ordre 
construites à partir des prédicats donnés au départ.

\ss
{\em Une \evd d'une \sadz}  
$S=(\cT,(G\vert R))$ est un arbre. A la racine de l'arbre, il y a 
implicitement présents tous les faits concernant $S$ contenus dans 
les relations de départ. En chaque point de l'arbre des {\em 
paramètres existentiels} peuvent avoir été introduits lors de 
l'utilisation d'axiomes existentiels. Un terme construit sur 
$\cL\cup G$ est appelé un {\em terme clos} en un point de l'arbre 
s'il ne contient pas de variable et si les seuls paramètres sont des 
générateurs de $S$ ou des paramètres existentiels 
précédemment introduits dans la branche où on est.
 
Lors de la construction d'un arbre d'\evdz, voici ce qu'il faut faire 
pour utiliser {\em légitimement} un axiome tel que~: 

\axio{H(\gx) }{  \left(\E \gy^1 A_1(\gx,\gy^1) \ou \ldots \ou \E \gy^k 
A_k(\gx,\gy^k)\right)}{{\rm Nom-Ax}}

\noi On remplace les $x_j$ de la liste $\gx$ par des termes $t_j$ qui 
sont clos (au point où on est) et dont on a établi qu'ils valident 
l'hypothèse $H(\gx)$ (au point où on est). Si l'axiome est 
disjonctif, on produit un nœud à $k$ branches. Dans la $i$-ème 
branche, les formules constituant la liste 
$ A_i({\bf t },\gy^i)$ sont valides. Si l'axiome est existentiel, on 
introduit de nouveaux paramètres correspondant aux objets affirmés 
exister dans l'axiome. Les $y^i_j$ doivent être pris parmi les 
paramètres non encore utilisés dans la branche où l'on est.

\ss Dans une \evd d'une \sad $S$ les seuls pa\-ra\-mètres qui 
apparaissent sont donc les paramètres de départ et les 
paramètres existentiels.

\ss En un point d'un arbre d'\evd d'une structure $S$ on appelle 
{\em fait} une formule atomique $Q(t_1,\ldots,t_n)$ où $Q$ est un 
prédicat $n$-aire du langage $\cL$ et les $t_i$ sont des termes clos 
en ce point de l'arbre. 
Un arbre d'\evd est donc un système de démonstration extrêmement simple 
visant à valider des faits.

\begin{definition} {\em  (faits vrais dans une \sadz)}
\label{def.vrai} Soit $S=(\cT,\cP)=((\cL\vert\cD),(G\vert R))$ une 
\sadz.
Un {\em  fait concernant $S$} (ou encore, un fait de $S$) est une 
formule $Q(t_1,\ldots,t_n)$ où $Q$ est un prédicat figurant dans 
$\cL$ et les $t_i$ sont des termes $S$-clos de $\cL\cup G$.
Un tel fait est dit {\em vrai dans la \sad} $S$ considérée si on a 
construit un arbre d'\evd de la structure, et que ce fait est prouvé 
vrai à toutes les feuilles de l'arbre.
\end{definition}

Une chose importante est qu'une \evd ne s'intéresse qu'aux choses 
vraies, et pas aux choses fausses. 
Un fait qui n'est pas prouvable ne peut pas pour autant être 
déclaré faux. 
Nous examinerons ceci plus en détail un peu plus loin.

\ss Toute structure algébrique ordinaire  $S$  qui satisfait les 
axiomes d'une \tdy $\cT$ (lus comme des axiomes ordinaires) fournit un 
cas particulier de \sad  $S_1=(\cT,(G\vert {\rm  Diag}_{\cT}(S)))$, 
en prenant comme générateurs de départ les éléments de  $S$  
(ou un système de générateurs de  $S$)  et comme relations de 
départ le $\cT$-diagramme de $S$, ${\rm  Diag}_{\cT}(S)$, \cad les 
faits  vrais dans  $S$  (ou un ensemble de faits vrais dans  $S$  qui 
impliquent, au sens de l'\evdz, tous les faits vrais dans  $S$). 
Nous noterons $\sd{S}{\cT}$ cette \sadz.

Les faits  vrais (au sens usuel) dans la structure ordinaire  $S$  sont 
alors exactement les faits  vrais (au sens de l'\evdz) dans la 
structure dynamique  $\sd{S}{\cT}$. A vrai dire dans ce cas, 
le processus de l'\evd n'apporte rien concernant les faits. 
Si au contraire la  structure algébrique ordinaire $S$ ne satisfait 
pas certains axiomes de $\cT$, en particulier si elle ne possède pas 
tous les prédicats du langage de $\cT$, le processus d'\evd peut 
enclancher une exploration des structures correspondantes 
``idéalement possibles'' formées à partir de $S$ quand on impose 
les contraintes supplémentaires de $\cT$.

\ss Une {\em \rdyz}  a la même forme générale qu'un axiome~:
$$ H(\gx) \impl \left(\E\gy^1 A_1(\gx,\gy^1) \ou \ldots \ou \E\gy^k
A_k(\gx,\gy^k)\right) $$ 
Elle est dite {\em  valide (pour la \tdy considérée)} si elle peut 
être prouvée à partir des
axiomes de la théorie. Cette démonstration doit être obtenue en donnant 
une \evd de la structure $S$ de type $\cT$ et de présentation 
$\cP=(\gu\vert H(\gu))$ (les variables $x_i$ ont été remplacées 
par des générateurs $u_i$). A
l'extrémité de chaque branche doit être prouvée l'une des
conclusions 
$A_i(\gu,\gt^i)$ où les $t^i_j$ sont des termes construits au cours 
de l'\evdz.
Lorsqu'on a prouvé qu'une règle est valide, son utilisation de la 
même manière que les axiomes est légitime et elle ne permet pas 
de démontrer d'autres faits que ceux prouvés à partir des 
axiomes.

\paragraph{La structure réduite à un point et le collapsus}~\npb
\label{para coll}

\noi  La {\em structure ponctuelle} est par définition la structure 
réduite à un point en lequel tous les prédicats sont vrais. Cette 
structure satisfait toujours les axiomes, ce qui évite radicalement 
l'usage de la négation.

\smallskip Dans une \evdz, une branche {\em meurt}, ou {\em collapse}, 
si elle prouve un fait à partir duquel on sait déduire que tous les 
faits sont vrais (ce qui correspond à la structure ponctuelle 
précédemment décrite). 
Dans le cas de structures qui sont des surstructures de la structure 
d'anneau, un tel fait est  $1=0$, (on rajoutera s'il le faut un axiome 
affirmant, pour chaque nouveau prédicat, qu'il est vrai sous 
l'hypothèse  $1 = 0$).
Dans le cas général, on introduit  $\Faux$  en tant que constante 
logique, avec les {\em axiomes logiques} (un pour chaque prédicat 
$n$-aire)~:

\axio{\Faux}{A(x_1,x_2,\ldots,x_n)}{\Faux,A}

\noi (en particulier cela réduit la structure à un point). 
La connotation inexistentielle du  Faux  est donc intuitivement 
remplacée par la connotation de ``réduction au cas trivial sans 
intérêt''.  

\begin{definition} 
\label{def.collapsus} 
\begin{itemize}
\item Une structure algébrique dynamique {\em collapse}  si on a 
construit une évaluation dynamique dans laquelle toutes les branches 
sont mortes. 
\item Soient $\cT_1=(\cL_1\vert\cD_1)$ et $\cT_2=(\cL_2\vert\cD_2)$ 
deux \tdys basées sur des extensions $\cL_1$ et $\cL_2$ d'un même 
langage $\cL=\cL_1\cap\cL_2$. 
\begin{itemize}
\item  On dit que $\cT_1$ et $\cT_2$ {\em  collapsent simultanément} 
si pour toute présentation $\cP=(G\vert R)$ sur le langage $\cL$, la 
structure $S_1=(\cT_1,\cP)$ collapse \ssi la structure 
$S_2=(\cT_2,\cP)$ collapse. 
\item  On dit que les $\cT_1$ et $\cT_2$ {\em  prouvent les mêmes 
faits} si pour toute présentation $\cP=(G\vert R)$ sur le langage 
$\cL$, la structure $S_1=(\cT_1,\cP)$ et la structure $S_2=(\cT_2,\cP)$ 
prouvent les mêmes faits (écrits dans $\cL\cup G$).
\end{itemize}
\end{itemize}
\end{definition}

Lorsque  $\cT_2$ est une extension de $\cT_1$  les axiomes et 
prédicats supplémentaires introduits dans $\cT_2$ pour nous 
faciliter les calculs (il est plus facile de calculer dans un corps 
réel clos que dans un corps ordonné par exemple) sont en fait 
inoffensifs, s'il y a collapsus simultané.
Les démonstrations de \nsts dans \cite{clr} sont obtenues notamment par la 
mise en évidence que certaines \tdys collapsent simultanément. 
Cette mise en évidence est elle même très proche de certaines 
démonstrations classiques abstraites des mêmes   
\nstsz. La différence avec les démonstrations classiques en question est que 
dans \cite{clr} tous les \nsts sont effectifs.

\newpage

\markboth{Note historique}{Note historique}
\rdb
{\bf \large Note historique}
\addcontentsline{toc}{section}{Note historique} \label{notehistorique}

\smallskip L'article présent donne avec les \dfns \ref{def.pseudoreg} et 
\ref{def.dimKrull} une \dfn constructivement acceptable des phrases \gui{la \ddk de l'anneau  $\gA$ est $\leq \ell$}\  et  \gui{la \ddk de l'anneau  $\gA$ est $\geq \ell$}. Bien que l'introduction insiste plus sur les \thos de type \nstz, la section \ref{sec Krull} annonce clairement le projet d'utiliser cette \dfn pour donner un contenu algorithmique aux (à la plupart des) \thos de \clama qui ont pour hypothèse une majoration de la \ddk et pour conclusion un résultat concret. Ce programme a été en partie réalisé, essentiellement grâce à Thierry Coquand, comme on peut s’en rendre compte en lisant le survey  \url{www.cse.chalmers.se/~coquand/logicalgebra.pdf} et les derniers chapitres du livre  {\it Commutative algebra: Constructive methods. Finite projective modules}. Henri Lombardi, Claude Quitté. Springer (2015): \url{https://arxiv.org/abs/1605.04832} (version enrichie et corrigée).

\smallskip Les définitions constructives de la dimension de Krull d'un anneau commutatif ont été mises au point dans plusieurs articles. 

\smallskip Le premier est la note d'André Joyal: Les théorèmes de Chevalley-Tarski et remarques sur l'algèbre constructive. \emph{Cah. Topologie Géom. Différ. Catégoriques}, {\bf 16}, 256--258, (1976), développée dans la thèse de Luis Espa{\~ n}ol: Dimensión en álgebra constructiva. 1978. 
\url{https://dialnet.unirioja.es/descarga/tesis/1402.pdf} et dans plusieurs articles, notamment: Luis Espa{\~n}ol:  Le spectre d'un anneau dans l'algèbre constructive et applications à la dimension. \emph{Cah. Topologie Géom. Différ. Catégoriques}, {\bf 24}, 133--144, (1983).

La définition de Joyal évoquée dans Boileau and Joyal (1981) a été analysée par Cederquist and Coquand (2000) en utilisant la notion de relation implicative.\\
André Boileau and André Joyal. La logique des topos.
\emph{Journal of Symbolic Logic}, {\bf 46}, 6--16, (1981).\\
Jan Cederquist and Thierry Coquand. Entailment relations and distributive lattices. \emph{Logic Colloquium '98 ({P}rague)}, volume~13 des \emph{Lect.
  Notes Log.}, pages 127--139. Assoc. Symbol. Logic, Urbana, IL, (2000).
\url{https://www.cse.chalmers.se/~coquand/lattice.ps}

\smallskip Le deuxième est l'article présent, écrit sans avoir connaissance  des travaux de Joyal,  Español et Coquand. Dans cet article une caractérisation explicite de la dimension en termes d’identités algébriques est démontrée. Une caractérisation de ce type n'existait pas dans les travaux précédents.
Cette caractérisation purement algébrique conduit à une démonstration particulièrement simple de la dimension d'un anneau de polynômes sur un corps, donnée dans le \tho \ref{thKDP}.

\smallskip Ensuite, deux articles donnent 
l'explication de l'équivalence des deux notions précédentes en mathématiques constructives et développent pour cela une troisième caractérisation constructive équivalente.\\ 
T. Coquand, H. Lombardi. Hidden constructions in abstract algebra: Krull dimension of distributive lattices and commutative rings. In \emph{Commutative ring theory and applications} (Fez, 2001), volume 231 de Lecture Notes in Pure and Appl. Math., 477--499. Dekker, New York, 2003. \url{http://arxiv.org/abs/1712.04725}.\\
T. Coquand, H. Lombardi.  Constructions cachées en algèbre abstraite. Dimension de Krull, Going up, Going down. Rapport technique, Département de Mathématiques de l'Université de Franche-Comté, 2018.  Mise à jour en 2018 d'un preprint de 2001. \url{lhttp://arxiv.org/abs/1712.04728}.

\smallskip Enfin un troisième article donne une caractérisation en termes de \gui{bords}:\\
Thierry Coquand, Henri Lombardi et Marie-Françoise Roy: An elementary characterization of Krull dimension. Dans \emph{From sets and types to topology and analysis}, volume 48 de Oxford Logic Guides, 239--244. Oxford Univ. Press, Oxford, 2005. \url{http://hlombardi.free.fr/publis/lebord.pdf}.

\smallskip 
Signalons dans la suite immédiate de l’article précédent les articles:
\\
T. Coquand, H. Lombardi. A short proof for the Krull dimension of a polynomial ring. 
\emph{American Math. Monthly}   {\bf 112}  no. 9, 826--829,  (2005).
\url{http://hlombardi.free.fr/publis/KrullMathMonth.pdf}.
\\
T. Coquand. {Sur un th\'eor\`eme de {K}ronecker concernant les vari\'et\'es
              alg\'ebriques},
{\em Comptes Rendus Math\'ematique. Acad\'emie des Sciences. Paris},
{\bf 338}, no4, 291--294, (2004).
\url{https://www.cse.chalmers.se/~coquand/heit.ps}.

\smallskip La définition en termes de bords est récursive et c'est elle qui a permis de donner des versions constructives de nombreux résultats classiques par la suite. Notamment le Splitting Off de Serre, le théorème de Forster majorant le nombre de générateurs d'un module de type fini et le théorème de simplification de Bass, tout ceci dans le cas où l'hypothèse est une borne sur la dimension de Krull. 

\smallskip La version non noethérienne de ces théorèmes est due à Raymond Heitmann dans:\\    
Generating non-Noetherian modules efficiently. \emph{Mich. Math. J.}, {\bf 31}, 67--180, 1984.

La nouvelle caractérisation récursive de la dimension de Krull a permis de traduire les démonstrations de Heitmann en des algorithmes, décrits dans l'article: \\
Thierry Coquand, Henri Lombardi et Claude Quitté: Generating non-Noetherian modules constructively. \emph{Manuscripta Math.}, {\bf 115} (4) 513--520, 2004. \url{https://www.cse.chalmers.se/~coquand/fs.ps}

\smallskip Une autre issue insoupçonnée de la dimension définie dans l'article présent a été mise à jour par G. Kemper et I. Yengui dans l'article suivant:\\
Valuative dimension and monomial orders. \textsl{Journal of Algebra}, {\bf 557}, 278--288, 2020. \url{https://arxiv.org/abs/1906.12067} (à la suite de \url{https://arxiv.org/pdf/1303.3937.pdf}).
\\
Cet article donne une définition constructive de la dimension valuative d'un anneau commutatif basée sur une très légère variante de la définition de la dimension de Krull donnée dans l'article présent.
 Ihsen Yengui a par ailleurs utilisé la  définition constructive de la  
dimension de Krull dans plusieurs articles, dont certains établissent des résultats inconnus auparavant en mathématiques classiques.

\tableofcontents


\begin{thebibliography}{50}
\addcontentsline{toc}{section}{Bibliographie}

\bibitem{clr} Coste M., Lombardi H., Roy M.-F. {\it  Dynamical method 
in algebra~: Effective Nullstellens\"atze}. À paraître dans 
Annals of Pure and Applied Logic. 

\bibitem{ddd} Della Dora J., Dicrescenzo C., Duval D. {\it  About a new
method for computing in algebraic number fields}  Proceedings 
Eurocal'85. Lecture Notes in Computer Science 204, 289--290 (1985). 
(Springer)

\bibitem{dd} Dicrescenzo C., Duval D. {\it Algebraic extensions and 
algebraic closure in Scratchpad. Symbolic and algebraic computation} 
(ISSAC 88). Lecture Notes in Computer Science 358 (Springer), (1989), 
440--446. 

 
\bibitem{Duv} Duval, D. {\em Simultaneous computations in fields of 
arbitrary characteristic} Computers and Mathematics, Eds Kaltofen E. 
and Watt S. M., (1989), 321--326. Springer.

\bibitem{dr1} Duval D., Reynaud J.-C. {\it  Sketches and Computation 
(Part I) Basic Definitions and Static Evaluation.} 
Mathematical Structures in Computer Science, {\bf 4}, (1994), 185--238.

\bibitem{dr2} Duval D., Reynaud J.-C. {\it  Sketches and Computation 
(Part II) Dynamic Evaluation and Applications.} 
Mathematical Structures in Computer Sciencen {\bf 4}, (1994), 239--271.


\bibitem{KL} Kuhlmann F.-V., Lombardi H. {\it Construction du 
hensélisé d'un corps valué.}  Journal of Algebra  {\bf 228}, 
(2000), 624--632.

\bibitem{lom95} Lombardi H. {\it Un nouveau positiv\-stellen\-satz 
effectif pour les corps valués.}    Séminaire ``Structures 
Ordonnées'' (Paris 6-7)  (18 pages, paru en 96 dans la livraison 
94-95. Editeurs: F. Delon, A. Dickmann, D. Gondard)

\bibitem{lom97} Lombardi H.   {\it Le contenu constructif d'un principe 
local-global avec une
application à la structure d'un module projectif de type fini }.  
Publications Mathématiques
de Besan\c{c}on. Théorie des nombres. (1997). Fascicule  94--95 \& 
95--96.
 
\bibitem{lom98} Lombardi H. {\it  Relecture constructive de la 
théorie d'Artin-Schreier}. Journal of Pure and Applied Logic,  {\bf 
91}, (1998),  59--92. 

\bibitem{lom99} Lombardi H. {\it Platitude, localisation et anneaux de 
Pr\"ufer~: une approche constructive}. Preprint (1999). 

\bibitem{lom99a}   Lombardi H.
{\em Constructions cachées en algèbre abstraite (1)  
Relations de dépendance intégrale}.  \`A paraître dans le
Journal of Pure and Applied Algebra.

\bibitem{lq99}  Lombardi H., Quitté C. 
{\em Constructions cachées en algèbre abstraite (2)  
Théorème de Horrocks, du local au global}.  Preprint (1999).

\bibitem{MRR} Mines R., Richman F., Ruitenburg W. {\it A Course in 
Constructive Algebra.} Universitext. Springer-Verlag, (1988).

\end{thebibliography}
\end{document}